\newtheorem{theorem}{Theorem}[section]
\newtheorem{corollary}[theorem]{Corollary}
\newtheorem{definition}[theorem]{Definition}
\newtheorem{proposition}[theorem]{Proposition}
\newtheorem{remark}[theorem]{Remark}
\begin{document}

\title{The sub-Riemannian geometry of screw motions with constant pitch}\thanks{This work was supported by Consejo Nacional de Investigaciones
	Cient\'ificas y T\'ecnicas and Secretar\'{\i}a de Ciencia y T\'ecnica de la
	Universidad Nacional de C\'ordoba.}

\author{Eduardo Hulett}
\author{Ruth Paola Moas}
\author{Marcos Salvai}

\date{}
\begin{abstract}We consider a family of Riemannian manifolds $M$ such that for each unit speed geodesic $\gamma $ of $M$ there exists a distinguished bijective
	correspondence $L$ between infinitesimal translations along $\gamma $ and
	infinitesimal rotations around it. The simplest examples are $\mathbb{R}^{3}$%
	, $S^{3}$ and hyperbolic $3$-space, with $L$ defined in terms of the cross
	product. More generally, $M$ is a connected compact semisimple Lie group, or its non-compact dual,
	or Euclidean space acted on transitively by some group which is contained
	properly in the full group of rigid motions. 
	
	Let $G$ be the identity
	component of the isometry group of $M$. A curve in $G$ may be thought of as
	a motion of a body in $M$. Given $\lambda \in \mathbb{R}$, we define a left
	invariant distribution on $G$ accounting for infinitesimal roto-translations
	of $M$ of pitch $\lambda $. We give conditions for the controllability of
	the associated control system on $G$ and find explicitly all the geodesics
	of the natural sub-Riemannian structure. We also study a similar system on $%
	\mathbb{R}^{7}\rtimes SO\left( 7\right) $ involving the octonionic cross
	product. In an
	appendix we give a friendly presentation of the non-compact dual of a
	compact classical group, as a set of \textquotedblleft small
	rotations\textquotedblright.
 \end{abstract}
%
%
\subjclass{53C17, 49N10, 53C22, 70B10, 17B25}
\keywords{control system, left invariant distribution, screw motion, sub-Riemannian geodesic, octonionic cross product}
\maketitle

\section{Introduction and statement of the results}

\subsection{Motivating examples}

Let $M_{\kappa }$ be the three dimensional space form of sectional curvature 
$\kappa =0,1,-1$, that is, $M_{0}=\mathbb{R}^{3}$, $M_{1}=S^{3}$ and $M_{-1}$
is hyperbolic space $H^{3}$. Let $G_{\kappa }$ be the identity component of
the isometry group of $M_{\kappa }$. For $\kappa =\pm 1$, if $M_{\kappa }$
is presented as usual as the connected component of $e_{0}$ of $\left\{ x\in 
\mathbb{R}^{4}\mid \left\langle x,x\right\rangle _{\kappa }=\kappa \right\} $%
, where 
\begin{equation*}
	\left\langle x,y\right\rangle _{\kappa }=\kappa
	x_{0}y_{0}+x_{1}y_{1}+x_{2}y_{2}+x_{3}y_{3}\text{,}
\end{equation*}%
then $G_{1}=SO\left( 4\right) $ and $G_{-1}=O_{o}\left( 1,3\right) $.
Identifying $\mathbb{R}^{3}$ with $\left\{ x\in \mathbb{R}^{4}\mid
x_{0}=1\right\} $, we have 
\begin{equation*}
	G_{0}=\left\{ \left( 
	\begin{array}{cc}
		1 & 0 \\ 
		a & A%
	\end{array}%
	\right) \mid a\in \mathbb{R}^{3}\text{, }A\in SO\left( 3\right) \right\} 
	\text{.}
\end{equation*}

Let $\mathfrak{g}_{\kappa }$ be the Lie algebra of $G_{\kappa }$ and let $%
\mathfrak{g}_{\kappa }=\mathfrak{p}_{\kappa }\oplus \mathfrak{k}_{\kappa }$
be the Cartan decomposition associated with $e_{0}$, into infinitesimal
translations of $M_{\kappa}$ through $e_{0}$ and infinitesimal rotations
around that point. Denoting $\mathfrak{o}\left( n\right) =\left\{ X\in 
\mathbb{R}^{n\times n}\mid X^{t}=-X\right\} $, for any $\kappa =0,1,-1$ we
have%
\begin{equation*}
	\mathfrak{p}_{\kappa }=\left\{ \left( 
	\begin{array}{cc}
		0 & -\kappa x^{t} \\ 
		x & 0%
	\end{array}%
	\right) \mid x\in \mathbb{R}^{3}\right\} \text{\ \ \ and\ \ \ \ }\mathfrak{k}%
	_{\kappa }=\left\{ \left( 
	\begin{array}{cc}
		0 & 0 \\ 
		0 & X%
	\end{array}%
	\right) \mid X\in \mathfrak{o}\left( 3\right) \right\} \text{.}
\end{equation*}%
For $x\in \mathbb{R}^{3}$ we set%
\begin{equation*}
	L_{x}\in \mathfrak{0}\left( 3\right) \text{,\ \ \ \ \ \ \ \ }L_{x}\left(
	y\right) =x\times y\text{, \ \ for }y\in \mathbb{R}^{3}\text{.}
\end{equation*}

\begin{definition}
	For $\lambda \in \mathbb{R}$, the $\lambda $-screw distribution on $%
	G_{\kappa }$ is the left invariant distribution $\mathcal{D}^{\lambda }$
	given at the identity by%
	\begin{equation*}
		\mathcal{D}_{e}^{\lambda }=\left\{ D_{\lambda }\left( x\right) :=\left( 
		\begin{array}{cc}
			0 & -\kappa x^{t} \\ 
			x & \lambda L_{x}%
		\end{array}%
		\right) \mid x\in \mathbb{R}^{3}\right\} \text{.}
	\end{equation*}
\end{definition}

Given $x\in \mathbb{R}^{3}$, let $\sigma _{x}$ be the geodesic in $M_{\kappa
}$ through $e_{0}$ with initial velocity $x\in \mathbb{R}^{3}\equiv
T_{e_{0}}M_{\kappa }$. Then, $t\mapsto \exp \left( tD_{\lambda }\left(
x\right) \right) $ is a monoparametric subgroup of $G_{\kappa }$ consisting
of roto-translations along and around $\sigma _{x}$, with pitch equal to $%
\lambda $.

\begin{remark}
	Thinking of the elements of $G_{\kappa }$ as positions of a body in $%
	M_{\kappa }$ with reference state at $e_{0}$, then, according to the
	constraints induced by $\mathcal{D}^{\lambda }$, at the infinitesimal level,
	the body can be translated $c$ units of length along any direction only if
	at the same time it rotates around that direction through an angle $\lambda
	c $.
\end{remark}

\smallskip

Let $\mathcal{D}$ be a smooth distribution on a manifold $N$. A smooth curve 
$\sigma $ in $N$ is said to be \emph{admissible} or \emph{horizontal} if $\sigma ^{\prime }\left( t\right) \in \mathcal{D}_{\sigma \left(
	t\right) }$ for all $t$. One says that the control system in $N$ determined
by $\mathcal{D}$ is \emph{controllable} if for each pair of points in $N$
there exists a piecewise admissible curve joining them.

\begin{proposition}
	\label{ControlR3}The control system $\left( G_{\kappa },\mathcal{D}^{\lambda
	}\right) $ is controllable if and only if $\kappa ^{2}\neq \lambda $.
\end{proposition}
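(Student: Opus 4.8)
The plan is to invoke the Chow--Rashevskii theorem and thereby replace the controllability question by a Lie-algebraic one. Since $G_{\kappa}$ is connected (for $\kappa=-1$ one takes the identity component $O_{o}(1,3)$), the system $\left(G_{\kappa},\mathcal{D}^{\lambda}\right)$ is controllable if and only if $\mathcal{D}^{\lambda}$ is bracket generating, that is, the Lie subalgebra $\mathfrak{h}^{\lambda}$ of $\mathfrak{g}_{\kappa}$ generated by $\mathcal{D}_{e}^{\lambda}=\left\{D_{\lambda}\left(x\right):x\in\mathbb{R}^{3}\right\}$ equals $\mathfrak{g}_{\kappa}$. The non-obvious implication ``controllable $\Rightarrow$ bracket generating'' I would prove contrapositively: if $\mathfrak{h}^{\lambda}$ is a proper subalgebra, then the left invariant distribution $\mathcal{D}^{\lambda}$ is everywhere tangent to the connected immersed subgroup $H$ with $\operatorname{Lie}(H)=\mathfrak{h}^{\lambda}$, so every admissible curve through $e$ stays in $H\neq G_{\kappa}$. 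Thus everything reduces to deciding whether $\mathfrak{h}^{\lambda}=\mathfrak{g}_{\kappa}$.

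The computational core is the bracket of two generators. Write $P(x)=\left(\begin{smallmatrix}0&-\kappa x^{t}\\ x&0\end{smallmatrix}\right)\in\mathfrak{p}_{\kappa}$ and $K(X)=\left(\begin{smallmatrix}0&0\\0&X\end{smallmatrix}\right)\in\mathfrak{k}_{\kappa}$, so that $D_{\lambda}\left(x\right)=P(x)+\lambda K(L_{x})$. The structural relations of the symmetric pair, each one a one-line matrix check, are $[K(X),K(Y)]=K([X,Y])$, $[K(X),P(y)]=P(Xy)$ and $[P(x),P(y)]=K(\kappa L_{x\times y})$ (the last one encoding the curvature $\kappa$); one also uses $L_{x}y=x\times y$ and $[L_{x},L_{y}]=L_{x\times y}$. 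Expanding $[D_{\lambda}\left(x\right),D_{\lambda}\left(y\right)]$ bilinearly and then substituting $P(z)=D_{\lambda}\left(z\right)-\lambda K(L_{z})$ to reabsorb the $\mathfrak{p}_{\kappa}$-component, I expect to land on
\[
[D_{\lambda}\left(x\right),D_{\lambda}\left(y\right)]=D_{\lambda}\bigl(2\lambda\,x\times y\bigr)+\left(\kappa-\lambda^{2}\right)K(L_{x\times y}).
\]
The decisive feature is that the first summand already lies in $\mathcal{D}_{e}^{\lambda}$, so modulo $\mathcal{D}_{e}^{\lambda}$ the bracket is simply the scalar $\kappa-\lambda^{2}$ times the rotation generator $K(L_{x\times y})$.

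From here the dichotomy is immediate. If $\kappa-\lambda^{2}\neq0$: as $x\times y$ sweeps out all of $\mathbb{R}^{3}$, the identity gives $K(L_{z})\in\mathfrak{h}^{\lambda}$ for every $z$, hence $\mathfrak{k}_{\kappa}\subseteq\mathfrak{h}^{\lambda}$; then $P(z)=D_{\lambda}\left(z\right)-\lambda K(L_{z})\in\mathfrak{h}^{\lambda}$, so $\mathfrak{p}_{\kappa}\subseteq\mathfrak{h}^{\lambda}$ and $\mathfrak{h}^{\lambda}=\mathfrak{p}_{\kappa}\oplus\mathfrak{k}_{\kappa}=\mathfrak{g}_{\kappa}$: the system is controllable. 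If $\kappa-\lambda^{2}=0$: the same identity shows $[\mathcal{D}_{e}^{\lambda},\mathcal{D}_{e}^{\lambda}]\subseteq\mathcal{D}_{e}^{\lambda}$, so $\mathcal{D}_{e}^{\lambda}$ is a three-dimensional Lie subalgebra of the six-dimensional $\mathfrak{g}_{\kappa}$, whence $\mathfrak{h}^{\lambda}\neq\mathfrak{g}_{\kappa}$ and the system is not controllable. Since $\kappa\in\{0,1,-1\}$, reading off for which $\lambda$ the scalar $\kappa-\lambda^{2}$ vanishes then yields the controllability criterion. The single genuinely delicate step is the displayed bracket identity — more precisely, the observation that the obstruction to bracket generation is concentrated in $\mathfrak{k}_{\kappa}$ with coefficient $\kappa-\lambda^{2}$; I would also take care to remark that, $G_{-1}$ being connected and the structural relations holding verbatim, the non-compact case needs no separate treatment.
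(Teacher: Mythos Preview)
Your argument is correct and is, at bottom, the same computation the paper carries out: Chow--Rashevskii reduces the question to whether $\mathcal{D}_e^{\lambda}+[\mathcal{D}_e^{\lambda},\mathcal{D}_e^{\lambda}]=\mathfrak{g}_\kappa$, and your bracket identity $[D_\lambda(x),D_\lambda(y)]=D_\lambda(2\lambda\,x\times y)+(\kappa-\lambda^2)K(L_{x\times y})$ is exactly the paper's formula $[(x,\lambda x),(y,\lambda y)]_k=(2\lambda[x,y],(\lambda^2+k)[x,y])$ written in the concrete matrices of $\mathfrak{g}_\kappa$ rather than in the unified algebra $\mathfrak{k}_k$ of Proposition~\ref{identif}. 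The paper treats this proposition as a one-line corollary of Theorem~\ref{main} (specializing to $\mathfrak{k}=\mathfrak{o}(3)$), whereas you compute directly; your route is slightly more self-contained and also makes the ``only if'' direction explicit (the paper's proof of Theorem~\ref{main} checks bracket generation in the good cases but leaves the failure of controllability when $\mathcal{D}_e^{\lambda}$ is a subalgebra implicit).

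One point to flag: the condition you arrive at is $\lambda^{2}\neq\kappa$, which agrees with Theorem~\ref{main} and with the hypothesis $\lambda^{2}\neq k$ in Theorem~\ref{Geo1}, but the proposition as printed reads $\kappa^{2}\neq\lambda$. For $\kappa=\pm1$ these are not equivalent (e.g.\ $\kappa=1$, $\lambda=-1$ is excluded by your criterion but not by $\kappa^{2}\neq\lambda$; $\kappa=-1$, $\lambda=1$ is excluded by $\kappa^{2}\neq\lambda$ but your computation shows the system is controllable there). Your derivation is the correct one; the discrepancy is a typo in the statement, not a flaw in your proof.
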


Cf.\ the controllability condition for infinitesimally helicoidal motions
with fixed pitch of oriented geodesics of a space form \cite{AS}.

\medskip

For the following definitions and facts we refer to \cite{RM,
	AgrachevLibroNuevo}; see also \cite{Alek}. Let $\mathcal{D}$ be a smooth
distribution on a manifold $N$. The Chow-Rashevsky Theorem gives a
sufficient condition for the system $\left( N,\mathcal{D}\right) $ to be
controllable: that $\mathcal{D}$ is bracket generating, that is, the vector
fields in $\mathcal{D}$ generate the Lie algebra of vector fields of $N$.

Let $\mathcal{D}$ be a bracket generating distribution on a manifold $N$. A 
\emph{sub-Riemannian} structure on $\left( N,\mathcal{D}\right) $ is an
assignment $g$ of a positive definite inner product $g_{p}$ on each subspace 
$\mathcal{D}_{p}$, varying smoothly with $p\in N$. The length of a
horizontal curve $\gamma :\left[ a,b\right] \rightarrow N$ is defined by 
$	\text{length}\left( \gamma \right) =\int_{a}^{b}\sqrt{g\left( \gamma
		^{\prime }\left( t\right) ,g^{\prime }\left( t\right) \right) }~dt\text{.}$

For arbitrary $p,q\in N$, the expression 
\begin{equation*}
d\left( p,q\right) =\inf \left\{ \text{length}\left( \gamma \right) \mid
	\gamma \text{ is a horizontal piecewise smooth curve joining }p\text{ with }q\right\}
\end{equation*}%
defines a distance on $N$. A constant speed horizontal curve $\gamma $ in $N$
is said to be a \emph{geodesic} if it minimizes the length locally, that is,
if $a<b$ in the domain of $\gamma $ are close enough, then the length of $%
\left. \gamma \right\vert _{\left[ a,b\right] }$ equals the distance between 
$\gamma \left( a\right) $ and $\gamma \left( b\right) $. The following
proposition describes the maximal geodesics of $\left( G_{\kappa },\mathcal{D%
}^{\lambda }\right) $ endowed with the natural sub-Riemannian structure.

\begin{proposition}
	Suppose that $\kappa ^{2}\neq \lambda $ and that the sub-Riemannian
	structure on $\left( G_{\kappa },\mathcal{D}^{\lambda }\right) $ is the left
	invariant metric whose norm at the identity is given by $\left\Vert
	D_{\lambda }\left( x\right) \right\Vert =\left\Vert x\right\Vert $ for all $%
	x\in \mathbb{R}^{3}$. Then a curve in $G_{\kappa }$ is a sub-Riemannian
	geodesic through the identity if and only if it equals $\gamma _{x,y}$ for
	some $x,y\in \mathbb{R}^{3}$, where, for all $t$,%
	\begin{equation*}
		\gamma _{x,y}\left( t\right) =\exp \left( t\left( 
		\begin{array}{cc}
			0 & -\kappa x^{t} \\ 
			x & L_{\lambda x+y}%
		\end{array}%
		\right) \right) \exp \left( t\left( 
		\begin{array}{cc}
			0 & 0 \\ 
			0 & -L_{y}%
		\end{array}%
		\right) \right) \text{.}
	\end{equation*}
\end{proposition}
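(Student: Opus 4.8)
The plan is to apply the Pontryagin Maximum Principle to the left--invariant sub-Riemannian structure on $\left(G_{\kappa},\mathcal{D}^{\lambda}\right)$, to integrate the resulting normal Hamiltonian system, and to recognise its solutions through the identity as the curves $\gamma_{x,y}$. I would start from the bracket relation, read off from the block matrices,
\begin{equation*}
\left[D_{\lambda}(x),D_{\lambda}(x')\right]=2\lambda\,p(x\times x')+\left(\kappa+\lambda^{2}\right)K_{x\times x'},
\end{equation*}
where $p(z)=\left(\begin{smallmatrix}0&-\kappa z^{t}\\ z&0\end{smallmatrix}\right)\in\mathfrak{p}_{\kappa}$ and $K_{z}=\left(\begin{smallmatrix}0&0\\ 0&L_{z}\end{smallmatrix}\right)\in\mathfrak{k}_{\kappa}$. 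Since the system is controllable (Proposition~\ref{ControlR3}), $\mathcal{D}^{\lambda}$ is bracket generating, and a short determinant check on the identity above shows that it is so already at step two, i.e.\ $\mathcal{D}_{e}^{\lambda}+[\mathcal{D}_{e}^{\lambda},\mathcal{D}_{e}^{\lambda}]=\mathfrak{g}_{\kappa}$. Hence the Goh condition forces the covector of any abnormal extremal to annihilate all of $\mathfrak{g}_{\kappa}$, so it vanishes: there are no strictly abnormal minimizers, and every geodesic through the identity is the projection of a normal extremal. As normal extremals are always locally length minimizing and have constant speed, and as the family $\{\gamma_{x,y}\}$ is visibly closed under rescaling of the parameter, the statement reduces to showing that the normal extremals issuing from $e$ are exactly the $\gamma_{x,y}$.

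To set up the normal flow I would left--trivialise $T^{*}G_{\kappa}\cong G_{\kappa}\times\mathfrak{g}_{\kappa}^{*}$ and identify $\mathfrak{g}_{\kappa}^{*}\cong\mathfrak{g}_{\kappa}$ by an inner product $\langle\ ,\ \rangle$ adapted to the reductive splitting $\mathfrak{g}_{\kappa}=\mathfrak{k}_{\kappa}\oplus\mathcal{D}_{e}^{\lambda}$: with $\mathfrak{k}_{\kappa}\perp\mathcal{D}_{e}^{\lambda}$ and $\langle D_{\lambda}(x),D_{\lambda}(x')\rangle=\langle x,x'\rangle$. The normal Hamiltonian system then reads
\begin{equation*}
g^{-1}\dot g=u(t),\qquad\dot\mu=\operatorname{ad}^{*}_{u(t)}\mu,
\end{equation*}
with $u(t)$ the $\mathcal{D}_{e}^{\lambda}$--component of $\mu(t)$ for that splitting. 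Two structural facts make this tractable: $[\mathfrak{k}_{\kappa},\mathcal{D}_{e}^{\lambda}]\subseteq\mathcal{D}_{e}^{\lambda}$, more precisely $\operatorname{Ad}\!\left(\exp tK_{y}\right)D_{\lambda}(x)=D_{\lambda}\!\left(\exp(tL_{y})\,x\right)$; and the $\operatorname{Ad}(\exp\mathfrak{k}_{\kappa})$--invariance of the norm on $\mathcal{D}_{e}^{\lambda}$, since $\|D_{\lambda}(\exp(tL_{y})x)\|=\|\exp(tL_{y})x\|=\|x\|$.

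Writing $\mu(t)=D_{\lambda}(w(t))+K_{z(t)}$, so that $u(t)=D_{\lambda}(w(t))$, I would test $\dot\mu=\operatorname{ad}^{*}_{u}\mu$ against $\mathfrak{k}_{\kappa}$ and against $\mathcal{D}_{e}^{\lambda}$; using the two facts above this gives that the $\mathfrak{k}_{\kappa}$--part $z(t)\equiv z$ is constant, while $\dot w=c\,(z\times w)$ for a constant $c$ that is nonzero precisely when the system is controllable. Hence $w(t)=\exp(tcL_{z})\,w(0)$, and putting $x:=w(0)$ and $y:=cz$ one gets $u(t)=D_{\lambda}\!\left(\exp(tL_{y})x\right)=\operatorname{Ad}\!\left(\exp tK_{y}\right)D_{\lambda}(x)$. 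A direct differentiation of the product shows that $g(t)=\exp\!\bigl(t(D_{\lambda}(x)+K_{y})\bigr)\exp(-tK_{y})$ has $g(0)=e$ and $g^{-1}\dot g=\operatorname{Ad}\!\left(\exp tK_{y}\right)D_{\lambda}(x)$; since $D_{\lambda}(x)+K_{y}=\left(\begin{smallmatrix}0&-\kappa x^{t}\\ x&L_{\lambda x+y}\end{smallmatrix}\right)$ and $-K_{y}=\left(\begin{smallmatrix}0&0\\ 0&-L_{y}\end{smallmatrix}\right)$, uniqueness of solutions of ordinary differential equations yields $g=\gamma_{x,y}$. Conversely, the assignment $(x,y)\mapsto\mu(0)=D_{\lambda}(x)+K_{y/c}$ is a linear isomorphism $\mathbb{R}^{3}\times\mathbb{R}^{3}\to\mathfrak{g}_{\kappa}^{*}$, so the $\gamma_{x,y}$ exhaust the normal extremals through $e$; together with the first paragraph this proves the equivalence.

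I expect the main obstacle to be the structural step in the third paragraph: the observation that the $\mathfrak{k}_{\kappa}$--component of the normal momentum is a conserved quantity, which is what produces the closed form $w(t)=\exp(tcL_{z})w(0)$ and hence the two-factor exponential shape of $\gamma_{x,y}$. Everything else should be either a finite Lie-algebra computation (the bracket identity, the $\operatorname{Ad}$--equivariance of $D_{\lambda}$, the value of $c$, and the product-rule verification for $\gamma_{x,y}$) or a standard sub-Riemannian fact (local minimality of normal extremals, and the Goh condition ruling out strictly abnormal minimizers). One should also keep track of the borderline values of $(\kappa,\lambda)$ at which $c$ or the step-two property degenerates, since these are exactly the non-controllable cases that the hypothesis excludes.
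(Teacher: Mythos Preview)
Your argument is correct and takes a genuinely different route from the paper's. The paper does not integrate the normal Hamiltonian system directly; instead (proving the general Theorem~\ref{Geo1}, of which this proposition is the special case $\mathfrak{k}=\mathfrak{o}(3)$) it constructs an ad-hoc \emph{bi-invariant} pseudo-Riemannian metric $h_{\lambda,k}$ on $G_{k}$ --- a rescaling of the form $g_{\lambda,k}\bigl((x,y),(u,v)\bigr)=\lambda\langle x,v\rangle+\lambda\langle y,u\rangle-\langle y,v\rangle-k\langle x,u\rangle$ in the unified coordinates $\mathfrak{k}_{k}$ of Proposition~\ref{identif} --- with the property that $\mathcal{D}^{\lambda}_{e}$ is exactly the $h_{\lambda,k}$-orthogonal complement of the isotropy subalgebra and that $h_{\lambda,k}$ restricts on $\mathcal{D}^{\lambda}_{e}$ to the prescribed sub-Riemannian norm. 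It then invokes Montgomery's result (Proposition~\ref{PropRM}) to obtain the product-of-exponentials form in one stroke. Your approach trades that single external citation for an explicit integration: you exploit the reductive relation $[\mathfrak{k}_{\kappa},\mathcal{D}^{\lambda}_{e}]\subseteq\mathcal{D}^{\lambda}_{e}$ and the $\operatorname{Ad}(K)$-invariance of the norm on $\mathcal{D}^{\lambda}_{e}$ to show that the $\mathfrak{k}_{\kappa}$-component of the momentum is conserved, then solve the resulting linear ODE $\dot w=c\,(z\times w)$ on the $\mathcal{D}^{\lambda}_{e}$-part. The paper's route buys brevity and an immediate generalisation to arbitrary compact semisimple $K$; yours buys self-containment --- you never have to discover the non-obvious bi-invariant form (which the paper itself flags in Remark~\ref{Remark1} as the main novelty), and your argument makes visible that the underlying mechanism is the right $K$-symmetry of the sub-Riemannian structure. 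Both arguments dispose of abnormal extremals identically, via the step-two property. One small point to tighten: you should fix the inner product on $\mathfrak{k}_{\kappa}$ (e.g.\ $\langle K_{y},K_{y'}\rangle=\langle y,y'\rangle$) so that the identification $\mathfrak{g}_{\kappa}^{*}\cong\mathfrak{g}_{\kappa}$ and the constant $c$ are well defined; as you implicitly note, the final formula is insensitive to this choice after the substitution $y=cz$.
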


\subsection{Manifolds with distinguished screw motions\label{1.2}}

We consider a broad family of manifolds generalizing the examples above.

Let $M$ be a complete Riemannian manifold and let $\gamma :\mathbb{R}%
\rightarrow M$ be a unit speed geodesic of $M$. An isometry $\varphi $ of $M$
is said to be a \emph{translation along} $\gamma $ if there exists $%
t_{o}\in \mathbb{R}$ such that $\varphi \left( \gamma \left( t\right)
\right) =\gamma \left( t+t_{o}\right) $ and $\left( d\varphi \right)
_{\gamma \left( t\right) }$ realizes the parallel transport along $\gamma $
from $T_{\gamma \left( t\right) }M$ to $T_{\gamma \left( t+t_{o}\right) }M$,
for all $t\in \mathbb{R}$. The abundance of translations characterizes the
symmetric spaces: The manifold $M$ is symmetric if and only if for any
geodesic $\gamma $ in $M$ there is a monoparametric group of isometries of $%
M $ consisting of translations along $\gamma $.

We consider symmetric spaces $M$ such that for each unit speed geodesic $%
\gamma $ in $M$ there is a distinguished bijection between infinitesimal
translations along $\gamma $ and infinitesimal rotations around it and we
study the sub-Riemannian geometry associated with special screw motions of $%
M $.

From now on, $K$ will be a connected compact semisimple Lie group and $%
\mathfrak{k}$ its Lie algebra.

Let $K^{\mathbb{C}}$ be the complexification of $K$. The maximal compact Lie
group of $K^{\mathbb{C}}$ is $K$ and its Lie algebra is $i\mathfrak{k+k}$,
the complexification of $\mathfrak{k}$. For instance, for the classical
groups $K=SO\left( n\right) $, $SU\left( n\right) $ and $Sp\left( n\right) $%
, we have $K^{\mathbb{C}}=SO\left( n,\mathbb{C}\right) $, $SL\left( n,%
\mathbb{C}\right) $ and $Sp\left( 2n,\mathbb{C}\right) $, respectively.

Let $\mathfrak{k}\rtimes _{\text{Ad}}K$ be the the Cartan motion group of $K$%
, that is, $\mathfrak{k}\times K$ endowed with the operation $\left(
w,B\right) \cdot \left( z,A\right) =\left( w+\text{Ad}\left( B\right)
z,BA\right) $. Its Lie algebra is $\mathfrak{k}\rtimes _{\text{ad}}\mathfrak{%
	k}$, with Lie bracket given by $\left[ \left( x,y\right) ,\left( u,v\right) %
\right] =\left( \left[ x,v\right] +\left[ y,u\right] ,\left[ y,v\right]
\right) $.

The connected compact semisimple Lie group $K$ gives rise to three symmetric
pairs: 
\begin{equation}
	\left( K\times K,\Delta _{+}\left( K\right) \right) \text{,\ \ \ \ \ \ \ \ \ 
	}\left( K^{\mathbb{C}},K\right) \text{\ \ \ \ \ \ \ \ \ and \ \ \ \ \ \ \ \ }%
	\left( \mathfrak{k}\rtimes _{\text{Ad}}K,K\right)  \label{GkK}
\end{equation}%
(given a group $N$, we denote as usual $\Delta _{\pm }\left( N\right)
=\left\{ \left( x,x^{\pm 1}\right) \mid x\in N\right\} $). We call them $%
\left( G_{K,1},H_{K,1}\right) $, $\left( G_{K,-1},H_{K,-1}\right) $ and $%
\left( G_{K,0},H_{K,0}\right) $, respectively, according to the sign $k\in
\left\{ 1,-1,0\right\} $ of the scalar curvature of the symmetric space $%
G_{K,k}/H_{K,k}$, which we call $M_{K,k}$.

We have that $M_{K,1}$ may be identified with $K$ (through the action $%
\left( k_{1},k_{2}\right) \cdot k=k_{1}kk_{2}^{-1}$) and $M_{K,0}$ with $%
\mathfrak{k}$, i.e., it is Euclidean space where we do not consider the full
group of rigid motions, but only $\mathfrak{k}\rtimes _{\text{Ad}}K$. The
manifold $K^{\mathbb{C}}/K$ is the non-compact dual symmetric space of $K$.
In the appendix we give a more amiable presentation of this quotient when $K$
is a classical compact Lie group.

Henceforth, we sometimes omit the subindices $K$ or $k$, if there is no
danger of confusion, when dealing with properties that are shared.

Let $\mathfrak{g}$ be the Lie algebra of $G$ and let $\mathfrak{g}=\mathfrak{%
	p}\oplus \mathfrak{h}$ be the Cartan decomposition associated with the point 
$H\in M$, into infinitesimal translations through $H$ and infinitesimal rotations around $H$. We describe $\mathfrak{p}$ and $\mathfrak{h}$ in the following
table, together with a distinguished linear map $L:\mathfrak{p}\rightarrow 
\mathfrak{h}$ satisfying $\left[ L\left( X\right) ,X\right] =0$ for all $%
X\in \mathfrak{p}$.%
\begin{equation}
	\begin{tabular}{|l|l|l|l|l|l|l|}
		\hline
		$M$ & $G$ & $H$ & $\mathfrak{g}$ & $\mathfrak{p}$ & $\mathfrak{h}$ & $L:%
		\mathfrak{p}\rightarrow \mathfrak{h}$ \\ \hline
		$K$ & $K\times K$ & $\Delta _{+}\left( K\right) $ & $\mathfrak{k}\times 
		\mathfrak{k}$ & $\Delta _{-}\left( \mathfrak{k}\right) $ & $\Delta
		_{+}\left( \mathfrak{k}\right) $ & $L\left( x,-x\right) =\left( x,x\right) $
		\\ \hline
		$K^{\mathbb{C}}/K$ & $K^{\mathbb{C}}$ & $K$ & $\mathfrak{k+ik}$ & $i%
		\mathfrak{k}$ & $\mathfrak{k}$ & $L\left( ix\right) =x$ \\ \hline
		$\mathfrak{k}$ & $\mathfrak{k}\rtimes _{\text{Ad}}K$ & $\left\{ 0\right\}
		\times K$ & $\mathfrak{k}\rtimes _{\text{ad}}\mathfrak{k}$ & $\mathfrak{k}%
		\times \left\{ 0\right\} $ & $\left\{ 0\right\} \times \mathfrak{k}$ & $%
		L\left( x,0\right) =\left( 0,x\right) $ \\ \hline
	\end{tabular}
	\label{table}
\end{equation}

In Proposition \ref{identif} below we present a condensed version of the Lie
algebras of $G_{K,k}$, together with the operators $L$, that will be useful
in the proofs. Nevertheless, we have chosen to introduce them in this more
concrete and natural way. For the sake of brevity, we usually write $L_{X}$
instead of $L\left( X\right) $.

\begin{theorem}
	\label{main}Let $K$ be a connected compact semisimple Lie group and let $%
	\lambda \in \mathbb{R}$. For each symmetric pair $\left( G,H\right) $ in the
	table above and the corresponding operator $L$, let $\mathcal{D}^{\lambda }$
	be the left invariant distribution on $G$ given at the identity by%
	\begin{equation*}
		\mathcal{D}_{e}^{\lambda }=\left\{ X+\lambda L_{X}\mid X\in \mathfrak{p}%
		\right\} \subset \mathfrak{g}\text{.}
	\end{equation*}
	
	Then the control system on $G$ determined by $\mathcal{D}^{\lambda }$ is
	controllable, except for the pair $\left( K\times K,\Delta _{+}\left(
	K\right) \right) $ with $\lambda =\pm 1$, and the pair $\left( \mathfrak{k}%
	\rtimes _{\text{\emph{Ad}}}K,\left\{ 0\right\} \times K\right) $ with $%
	\lambda =0$.
\end{theorem}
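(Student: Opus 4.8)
The plan is to verify the Chow–Rashevsky bracket-generating condition for $\mathcal{D}^{\lambda}$ in all three cases, and separately to show that in the two exceptional cases the distribution is contained in a proper subalgebra (so that the reachable set stays in a proper subgroup). Since $\mathcal{D}^{\lambda}$ is left invariant, it suffices to compute iterated brackets of the constant left-invariant vector fields $X+\lambda L_X$, $X\in\mathfrak{p}$, inside $\mathfrak{g}$, and check when the resulting Lie subalgebra $\mathfrak{d}^{\lambda}$ is all of $\mathfrak{g}$. Because $\mathfrak{g}=\mathfrak{p}\oplus\mathfrak{h}$ with $[\mathfrak{p},\mathfrak{p}]\subseteq\mathfrak{h}$, $[\mathfrak{h},\mathfrak{p}]\subseteq\mathfrak{p}$, $[\mathfrak{h},\mathfrak{h}]\subseteq\mathfrak{h}$, and $\mathfrak{g}$ is semisimple (hence $[\mathfrak{g},\mathfrak{g}]=\mathfrak{g}$), the key subspaces to understand are $\mathfrak{h}_0:=[\mathfrak{p},\mathfrak{p}]$, which is an ideal in $\mathfrak{h}$ with $\mathfrak{p}\oplus\mathfrak{h}_0$ an ideal of $\mathfrak{g}$; by effectiveness of the symmetric pair $\mathfrak{h}_0=\mathfrak{h}$, so $\mathfrak{p}$ already generates $\mathfrak{g}$ as a Lie algebra.

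The core computation: given $X,Y\in\mathfrak{p}$, bracket the generators
\begin{equation*}
[X+\lambda L_X,\ Y+\lambda L_Y]=\bigl([X,Y]+\lambda^2[L_X,L_Y]\bigr)+\lambda\bigl([L_X,Y]+[X,L_Y]\bigr)\in\mathfrak{h}\oplus\mathfrak{p}.
\end{equation*}
I would first extract the $\mathfrak{p}$-component $P(X,Y):=[L_X,Y]+[X,L_Y]$ and the $\mathfrak{h}$-component $Q(X,Y):=[X,Y]+\lambda^2[L_X,L_Y]$. Using the defining property $[L_X,X]=0$ and bilinearity/polarization, one checks $P(X,Y)=[L_X,Y]+[L_Y,X]$ and notes that $P$ is symmetric; in each of the three concrete models $L$ is essentially an isometric identification between $\mathfrak{p}$ and $\mathfrak{h}$ (or between factors of $\mathfrak{k}$), so these brackets are explicitly computable. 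The strategy is: if $P(X,Y)$ spans a subspace of $\mathfrak{p}$ whose $L$-image together with $Q(X,Y)$ spans $\mathfrak{h}$, we are done by one more bracketing step. Concretely, for the compact-type pair $(K\times K,\Delta_+(K))$ with generators $(x,-x)+\lambda(x,x)=((1+\lambda)x,(\lambda-1)x)$, the reachable directions in $\mathfrak{k}\times\mathfrak{k}$ are generated by $\{((1+\lambda)x,(\lambda-1)x):x\in\mathfrak{k}\}$ and their brackets; when $\lambda=1$ this is $\{(2x,0)\}$, i.e. $\mathfrak{k}\times\{0\}$, a proper ideal, giving the exception (similarly $\lambda=-1$ gives $\{0\}\times\mathfrak{k}$), while for $\lambda\neq\pm1$ the two independent $\mathfrak{k}$-directions generate everything since $\mathfrak{k}$ is semisimple. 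For the Cartan motion group $\mathfrak{k}\rtimes_{\mathrm{ad}}\mathfrak{k}$, the generator is $(x,0)+\lambda(0,x)=(x,\lambda x)$; with the stated bracket $[(x,y),(u,v)]=([x,v]+[y,u],[y,v])$ one gets $[(x,\lambda x),(u,\lambda u)]=(2\lambda[x,u],\lambda^2[x,u])$, so when $\lambda=0$ all brackets vanish in the second slot and lie in $\mathfrak{k}\times\{0\}$, which is abelian-translational — reachable set is a proper (abelian, normal) subgroup — whereas for $\lambda\neq 0$ iterating produces $\{0\}\times[\mathfrak{k},\mathfrak{k}]=\{0\}\times\mathfrak{k}$ and then all of $\mathfrak{k}\times\{0\}$. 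For the non-compact dual $K^{\mathbb{C}}$, the generator is $ix+\lambda x$, $x\in\mathfrak{k}$, and $[ix+\lambda x,iy+\lambda y]=(\lambda^2-1)[x,y]+2i\lambda[x,y]$ (using $[ix,iy]=-[x,y]$), which is never a proper ideal for any real $\lambda$ because $K^{\mathbb{C}}$ is simple as a real Lie group — so controllability always holds there, consistent with the theorem having no $K^{\mathbb{C}}$-exception.

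For the exceptional cases I still owe the converse: that non-bracket-generating forces non-controllability. Here I would invoke that $\mathfrak{d}^{\lambda}$ integrates to a connected Lie subgroup $G'\subseteq G$, every horizontal curve through $e$ stays in $G'$ (an orbit-type / Frobenius argument on the foliation by translates of $G'$), and since $G'$ is proper the system is not controllable; I should check $G'$ is closed or at least that its coset space is non-trivial, which is clear since $\mathfrak{d}^{\lambda}$ is a proper ideal in each exceptional case (so $G'$ is normal and $G/G'$ is a positive-dimensional Lie group). The main obstacle I anticipate is not any single bracket computation but organizing the three models uniformly — the cleanest route is to first prove the abstract lemma that $\mathfrak{d}^{\lambda}$ is a subalgebra iff $P(X,Y)\in\mathfrak{d}^{\lambda}$ for all $X,Y$, translate everything through the ``condensed'' description promised in Proposition \ref{identif}, and then let the exceptional $\lambda$ values fall out as exactly those where the relevant quadratic expression in $\lambda$ (namely $\lambda^2-\kappa^2$-type factors, matching Proposition \ref{ControlR3}) degenerates.
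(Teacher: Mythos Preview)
Your approach is correct and matches the paper's strategy: verify Chow--Rashevsky by computing $[\mathcal{D}^\lambda_e,\mathcal{D}^\lambda_e]$ and checking when $\mathcal{D}^\lambda_e+[\mathcal{D}^\lambda_e,\mathcal{D}^\lambda_e]=\mathfrak{g}$. The paper condenses your three case-by-case computations into a single one via the unified model $\mathfrak{k}_k$ of Proposition~\ref{identif}, where the generator is $(x,\lambda x)$ and $[(x,\lambda x),(y,\lambda y)]_k=(2\lambda[x,y],(\lambda^2+k)[x,y])$, so the degeneracy condition $\lambda^2=k$ falls out at once for all three values of $k$; your separate treatments of $K\times K$, $K^{\mathbb{C}}$ and the Cartan motion group are precisely the images of this computation under the isomorphisms $T_k$. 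One minor point: your aside that controllability for $K^{\mathbb{C}}$ holds ``because $K^{\mathbb{C}}$ is simple as a real Lie group'' is neither needed nor correct when $K$ is merely semisimple, and the generated subalgebra is not a priori an ideal --- but your direct bracket computation already suffices, since the $2\times 2$ determinant $2\lambda\cdot\lambda-1\cdot(\lambda^2-1)=\lambda^2+1$ never vanishes for real $\lambda$. You also go beyond the paper by supplying the converse (non-controllability in the exceptional cases, via the observation that $\mathfrak{d}^\lambda$ is then a proper ideal so horizontal curves remain in a proper normal subgroup); the paper's proof establishes only the bracket-generating direction.
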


In order to define on $\left( G,\mathcal{D}^{\lambda }\right) $ a
sub-Riemannian structure, we consider on $\mathfrak{k}$ the bi-invariant
canonical inner product. This is the opposite of the Killing form on $%
\mathfrak{k}$ (for the classical cases it is a multiple of $\left\langle
x,y\right\rangle =-$ Re$~$tr$\left( x^{\ast }y\right) $, $x^{\ast }$ being
the conjugate transpose of $x$). We pass it to $\mathfrak{p}$ via the
canonical morphism between $\mathfrak{k}$ and $\mathfrak{p}$, namely, 
\begin{equation*}
	\mathfrak{k}\rightarrow \Delta _{-}\left( \mathfrak{k}\right) \text{,\ }%
	x\mapsto \left( x,-x\right) \text{,\ \ \ \ }\mathfrak{k}\rightarrow i%
	\mathfrak{k}\text{, }x\mapsto ix\text{\ \ \ \ and\ \ \ \ }\mathfrak{k}%
	\rightarrow \mathfrak{k}\times \left\{ 0\right\} \text{, }x\mapsto \left(
	x,0\right) \text{.}
\end{equation*}

The sub-Riemannian structure on $G$ is defined as the left invariant
quadratic form on the distribution $\mathcal{D}^\lambda$ given at the identity by $%
\left\Vert \left( X,\lambda L_{X}\right) \right\Vert =\left\Vert
X\right\Vert $,\ for $X\in \mathfrak{p}$.

We have determined all sub-Riemannian geodesics of $\left( G_{K,k},\mathcal{D%
}^{\lambda }\right) $ explicitly:

\begin{theorem}
	\label{Geo1}Suppose that $\lambda ^{2}\neq k$. Let $G_{k}$ be endowed with
	the left invariant sub-Riemannian structure defined at $e$ by the
	distribution $\mathcal{D}^{\lambda }$ with 
	\begin{equation*}
		\mathcal{D}_{e}^{\lambda }=\left\{ X+\lambda L_{X}\mid X\in \mathfrak{p}%
		\right\} \text{\ \ \ \ and\ \ \ \ \ }\left\Vert X+\lambda L_{X}\right\Vert
		=\left\Vert X\right\Vert
	\end{equation*}
	for $X\in \mathfrak{p}$. Then a curve $\gamma $ in $G_{k}$ with $\gamma
	\left( 0\right) =e$ is a sub-Riemannian geodesic through the identity if and
	only if%
	\begin{equation*}
		\gamma \left( t\right) =\exp \left( t\left( X+\lambda L_{X}+L_{Y}\right)
		\right) \exp \left( -tL_{Y}\right)
	\end{equation*}%
	for some $X,Y\in \mathfrak{p}$. Moreover, $\gamma \left( t\right) =\exp
	\left( t\left( X+\lambda L_{X}\right) \right) $ for all $t$ if and only if $%
	\left[ X,Y\right] =0$.
\end{theorem}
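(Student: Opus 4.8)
The strategy is the classical Pontryagin Maximum Principle (PMP) approach to left-invariant sub-Riemannian structures on Lie groups, specialised so that the geodesic equations can be integrated in closed form. Throughout I work with the condensed model of $\mathfrak g$ from Proposition \ref{identif} (to be quoted), in which $\mathfrak p$ and $\mathfrak h$ carry the canonical inner product, $L:\mathfrak p\to\mathfrak h$ is the distinguished isometric-type map with $[L_X,X]=0$, and the bracket relations among $\mathfrak p$, $L(\mathfrak p)$ are uniform in $k$. I would first record the key algebraic identities satisfied by $L$ in all three cases: $[L_X,L_Y]=L_{[L_X,Y]}$-type relations, $\langle L_X,L_Y\rangle=\langle X,Y\rangle$, the compatibility $[X,Y]\in\mathfrak h$ versus $[X,L_Y]\in\mathfrak p$, and most importantly the scalar $k$ appearing via $[X,Y]$ when one projects brackets of two $\mathfrak p$-vectors back through $L$. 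These are exactly what make the ansatz work and where the hypothesis $\lambda^2\neq k$ will enter (it guarantees that a certain linear map on $\mathfrak p$, built from $\lambda$ and the curvature sign, is invertible, so every initial covector corresponds to a unique horizontal initial velocity).

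**Setting up the Hamiltonian.** Identify $T^*G\cong G\times\mathfrak g^*$ by left translation and use the canonical inner product to identify $\mathfrak g^*\cong\mathfrak g=\mathfrak p\oplus\mathfrak h$. The sub-Riemannian Hamiltonian is $H(p)=\tfrac12\|\mathrm{pr}_{\mathcal D_e^\lambda}(p)\|^2$; since the distribution is $\{X+\lambda L_X:X\in\mathfrak p\}$ with norm $\|X+\lambda L_X\|=\|X\|$, a covector $p=(p_{\mathfrak p},p_{\mathfrak h})$ pairs with $X+\lambda L_X$ as $\langle p_{\mathfrak p},X\rangle+\lambda\langle p_{\mathfrak h},L_X\rangle=\langle p_{\mathfrak p}+\lambda L^*p_{\mathfrak h},X\rangle$, so the optimal control is $X=p_{\mathfrak p}+\lambda L^*p_{\mathfrak h}$ and the horizontal geodesic velocity is $u(t)=X(t)+\lambda L_{X(t)}$ with $X(t)$ the $\mathfrak p$-part of the (co)adjoint-transported covector. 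Writing the normal geodesic equation $\dot p=\mathrm{ad}^*_{u}p$ and decomposing into $\mathfrak p$- and $\mathfrak h$-components using the bracket relations above, I expect the $\mathfrak p$-component of the control $X(t)$ to satisfy a linear ODE with constant coefficients (an equation of the form $\dot X=[\,\Lambda, X\,]$ for a fixed element $\Lambda\in\mathfrak h$ built from the conserved $\mathfrak h$-part of the covector), hence $X(t)=\mathrm{Ad}(\exp t\Lambda)X(0)$, while the $\mathfrak h$-part stays essentially constant. This is the standard picture: normal sub-Riemannian geodesics on such groups are products of two one-parameter subgroups.

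**Producing the closed form and the degenerate case.** Given the above, set $\mathcal X:=X(0)$ and introduce $Y\in\mathfrak p$ by $\Lambda=L_Y$ (possible because $L:\mathfrak p\to\mathfrak h$ is a bijection onto its relevant range — here I must check it is surjective onto the space of candidate $\Lambda$'s, which holds in each line of the table). Then $u(t)=\mathrm{Ad}(\exp tL_Y)(\mathcal X+\lambda L_{\mathcal X})+\text{(the }\lambda L_Y\text{ piece)}$, and one verifies directly that $\gamma(t)=\exp\!\big(t(\mathcal X+\lambda L_{\mathcal X}+L_Y)\big)\exp(-tL_Y)$ has left-logarithmic derivative exactly $u(t)$ and satisfies $\gamma(0)=e$; this is a short computation using $\tfrac{d}{dt}\big(\exp(tA)\exp(-tB)\big)\cdot\gamma^{-1}=A-\mathrm{Ad}(\exp tA)B$ plus $[L_{\mathcal X},\mathcal X]=0$. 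Conversely, every normal geodesic arises this way by the existence-uniqueness for the Hamiltonian flow, once $\lambda^2\neq k$ ensures the covector-to-velocity map is a bijection; and since the system is two-step bracket generating one should note there are no strictly abnormal minimizers (or argue the statement only concerns the normal geodesics through $e$, matching the phrasing). Finally, for the last sentence: $\gamma(t)=\exp(t(\mathcal X+\lambda L_{\mathcal X}))$ for all $t$ exactly when the two exponential factors commute to first order and beyond, which unwinds to $[\mathcal X+\lambda L_{\mathcal X}, L_Y]=0$; using $[L_{\mathcal X},L_Y]=L_{[\,\cdot\,]}$ and $[\mathcal X,L_Y]\in\mathfrak p$ this separates into vanishing $\mathfrak p$- and $\mathfrak h$-parts, and a computation reduces it to $[\mathcal X,Y]=0$.

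**Main obstacle.** The technical heart is the bracket bookkeeping: verifying in the uniform model that $\mathrm{ad}^*_{u}p$ decomposes so that $X(t)$ solves $\dot X=[L_Y,X]$ with $L_Y$ genuinely constant along the flow, and that the scalar $k$ appears precisely in the nondegeneracy condition $\lambda^2\neq k$ (ruling out, e.g., the compact case $k=1$, $\lambda=\pm1$ and the flat case $k=0$, $\lambda=0$ excluded in Theorem \ref{main}, where the map degenerates and the "geodesic through $e$" description would fail). Getting these three cases to run in parallel — rather than checking $K\times K$, $K^{\mathbb C}$ and $\mathfrak k\rtimes K$ separately — is where Proposition \ref{identif}'s condensed presentation must be exploited, and it is the step most likely to hide sign or normalization subtleties.
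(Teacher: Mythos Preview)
Your approach is sound and would yield the result, but the paper takes a shorter and conceptually different route. Rather than integrating the Hamiltonian equations directly, the paper constructs an ad-hoc \emph{bi-invariant} pseudo-Riemannian metric $h_{\lambda,k}=\frac{1}{\lambda^{2}-k}\,g_{\lambda,k}$ on $G_{k}$ (with $g_{\lambda,k}$ given explicitly on the condensed model $\mathfrak k_{k}$ in terms of the canonical inner product on $\mathfrak k$) and checks two things: that $\mathcal D^{\lambda}_{e}$ is exactly the $h_{\lambda,k}$-orthogonal complement of the subalgebra $\mathfrak h\cong\{0\}\times\mathfrak k$, and that $h_{\lambda,k}$ restricts to the prescribed sub-Riemannian norm on $\mathcal D^{\lambda}_{e}$. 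Once this is done, Proposition~11.19 of Montgomery's book (quoted in the paper as Proposition~\ref{PropRM}) delivers the product-of-exponentials form for all normal geodesics immediately, and the step-2 observation disposes of abnormals, as you also note. The hypothesis $\lambda^{2}\neq k$ enters not as invertibility of a covector-to-velocity map, but precisely as the nondegeneracy condition for $g_{\lambda,k}$. Your direct PMP route buys self-containment at the cost of the bracket bookkeeping you flag as the main obstacle; the paper's route trades that entire computation for the single verification that $g_{\lambda,k}$ is bi-invariant, which is a one-line consequence of the bi-invariance of $\langle\cdot,\cdot\rangle$ on $\mathfrak k$. For the final assertion the paper simply invokes the standard fact that $e^{t(A+B)}=e^{tA}e^{tB}$ for all $t$ if and only if $[A,B]=0$, applied with $A=X+\lambda L_{X}$ and $B=L_{Y}$; in the condensed model $[(x,\lambda x),(0,y)]_{k}=([x,y],\lambda[x,y])$, so this vanishes exactly when $[X,Y]=0$, a bit more directly than your $\mathfrak p/\mathfrak h$ decomposition.
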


\begin{remark}
	\label{Remark1}Expressions of sub-Riemannian geodesics as products of
	exponentials can be found in several books and papers, for example, in \emph{%
		\cite{JurBook, Brockett, RM, Alibro, BoscainCG, Boscain, AM, GMG, Domokos, JMF,
			HMSl, Sachkov22}}. In many of them the case $\lambda =0$ \emph{(}pure
	translations\emph{)} is considered and usually further geometric properties
	are studied.
	The novelty of our result lies in the fact that we resort to an ad-hoc non-standard bi-invariant
	pseudo-Riemannian metric on $G$; for instance, it is not a multiple of the
	Killing form of $G$ when this is nondegenerate \emph{(}$k\neq 0$\emph{)}. Besides, 
	in our situation there is no inclusion relationship between $%
	\left[ \mathcal{D}^{\lambda },\mathcal{D}^{\lambda }\right] $ and $\left( 
	\mathcal{D}^{\lambda }\right) ^{\perp }$, except for $\lambda =0$.
\end{remark}

\subsection{Octonionic screw motions of $\mathbb{R}^{7}$}

Screw motions appear in another interesting context apart from the ones
studied above, induced by the octonionic cross product on $\mathbb{R}^{7}=%
\operatorname{Im}\left( \mathbb{O}\right) $, where $\mathbb{O}$ denotes the skew
field of the octonions, the biggest among the normed division algebras. In
Section \ref{SOcto} we recall the definition and some properties of the
octonionic cross product $\times :\mathbb{R}^{7}\times \mathbb{R}%
^{7}\rightarrow \mathbb{R}^{7}$. For $u\in \mathbb{R}^{7}$ we define $L_{u}:%
\mathbb{R}^{7}\rightarrow \mathbb{R}^{7}$ by $L_{u}\left( v\right) =u\times
v $. We have that $L_{u}\in \mathfrak{o}\left( 7\right) $. Next we look at a
situation similar to the one in Subsection \ref{1.2}, but now $L:\mathbb{R}%
^{7}\rightarrow \mathfrak{o}\left( 7\right) $ is no longer surjective (we
define a 7-dimensional distribution on Lie group of dimension 28).

\begin{theorem}
	\label{Octo}Let $G=\mathbb{R}^{7}\rtimes SO_{7}$ and $\lambda \neq 0$. Then
	the control system on $G$ determined by the left invariant distribution $%
	\mathcal{D}^{\lambda }$ on $G$ given at the identity $\left( 0,I_{7}\right) $
	by%
	\begin{equation*}
		\mathcal{D}_{\left( 0,I_{7}\right) }^{\lambda }=\left\{ \left( x,\lambda
		L_{x}\right) \mid x\in \operatorname{Im}\left( \mathbb{O}\right) \right\}
	\end{equation*}%
	is controllable.
\end{theorem}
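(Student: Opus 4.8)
The plan is to invoke the Chow--Rashevsky theorem, so everything reduces to showing that the left invariant distribution $\mathcal{D}^{\lambda}$ is bracket generating, i.e.\ that the iterated brackets of the constant vector fields $\xi_x := (x,\lambda L_x)$, $x\in\operatorname{Im}(\mathbb{O})$, span the whole Lie algebra $\mathfrak{g}=\mathbb{R}^{7}\rtimes_{\mathrm{ad}}\mathfrak{so}(7)$, which has dimension $28$. First I would record the bracket in $\mathfrak{g}$: writing elements as pairs $(u,A)$ with $u\in\mathbb{R}^{7}$, $A\in\mathfrak{so}(7)$, one has $[(u,A),(v,B)] = (Av - Bu,\,[A,B])$. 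Hence
\begin{equation*}
	[\xi_x,\xi_y] = \bigl(\lambda(L_x y - L_y x),\,\lambda^{2}[L_x,L_y]\bigr)
	= \bigl(2\lambda\, x\times y,\;\lambda^{2}[L_x,L_y]\bigr),
\end{equation*}
using anticommutativity of the octonionic cross product. So the first bracket already produces a translation part $2\lambda\,x\times y$; since $x\times y$ ranges over all of $\operatorname{Im}(\mathbb{O})$ as $x,y$ vary, and $\lambda\neq 0$, subtracting a suitable $\xi_{z}$ from $[\xi_x,\xi_y]/(2\lambda)$ we see that the $\mathfrak{so}(7)$-elements of the form $[L_x,L_y]$ all lie in $\mathrm{Lie}(\mathcal{D}^{\lambda})_e$.

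The crux is therefore a purely algebraic statement about the octonionic cross product: the subspace of $\mathfrak{so}(7)$ spanned by $\{[L_x,L_y] : x,y\in\operatorname{Im}(\mathbb{O})\}$, together with the further brackets it generates, must exhaust $\mathfrak{so}(7)$ (dimension $21$); once the full $\{0\}\times\mathfrak{so}(7)$ is in the Lie hull, bracketing with $\xi_x=(x,\lambda L_x)$ gives $(Ax,\ast)$ for all $A\in\mathfrak{so}(7)$, hence all of $\mathbb{R}^{7}\times\{0\}$, and we are done. To identify the algebra generated by the $[L_x,L_y]$ I would use the standard structure theory: the map $x\mapsto L_x$ identifies $\operatorname{Im}(\mathbb{O})$ with a $7$-dimensional subspace $\mathfrak{m}$ of $\mathfrak{so}(7)$, and the well-known "Malcev/Lie-triple" identity for the octonions, which in operator form reads
\begin{equation*}
	[L_x,L_y] = L_{x\times y} + 2\,D_{x,y},
\end{equation*}
where $D_{x,y}(z) = [[x,y],z] - 3\bigl((x\times y)\times z - \cdots\bigr)$ is (a multiple of) the $\mathfrak{g}_2$-derivation associated to the pair $x,y$. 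The derivations $D_{x,y}$ span the $14$-dimensional Lie algebra $\mathfrak{g}_{2}=\operatorname{Der}(\mathbb{O})\subset\mathfrak{so}(7)$, and $\mathfrak{so}(7)=\mathfrak{g}_2\oplus\mathfrak{m}$ as a $\mathfrak{g}_2$-module ($14+7=21$). From the displayed identity and the fact that $L_{x\times y}$ ranges over $\mathfrak{m}$, the span of the $[L_x,L_y]$ already contains both $\mathfrak{m}$ and $[\mathfrak{m},\mathfrak{m}]\cap\mathfrak{g}_2$; and since $[\mathfrak{m},\mathfrak{m}]$ projects onto $\mathfrak{g}_2$ (equivalently, $\mathfrak{so}(7)$ is generated as a Lie algebra by $\mathfrak{m}$, because $\mathfrak{m}$ is not contained in any proper subalgebra — the only subalgebras of $\mathfrak{so}(7)$ containing a generic such $\mathfrak{m}$ would have to be $\mathfrak{g}_2$-invariant, and $\mathfrak{g}_2$ itself does not contain $\mathfrak{m}$), we conclude that the Lie algebra generated by $\mathcal{D}_e^\lambda$ contains $\{0\}\times\mathfrak{so}(7)$, hence all of $\mathfrak{g}$.

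The step I expect to be the main obstacle is making the algebraic claim "$[\mathfrak{m},\mathfrak{m}]$ is not contained in $\mathfrak{m}\oplus(\text{proper subalgebra})$" fully rigorous, i.e.\ pinning down exactly the Lie subalgebra of $\mathfrak{so}(7)$ generated by $\mathfrak{m}=\{L_x : x\in\operatorname{Im}\mathbb{O}\}$. The clean way is to compute $\operatorname{tr}(L_x L_y)$ and the bracket relations explicitly on an orthonormal basis $e_1,\dots,e_7$ of $\operatorname{Im}(\mathbb{O})$ coming from the Fano-plane multiplication table, show that $\{[L_{e_i},L_{e_j}]\}$ together with $\{L_{e_i}\}$ already span a $21$-dimensional space, and invoke $\dim\mathfrak{so}(7)=21$ to conclude equality; this sidesteps any delicate subalgebra classification and reduces the whole theorem to a finite, checkable linear-algebra computation with the octonion structure constants, which I would relegate to Section~\ref{SOcto} where those constants are set up. Controllability then follows immediately from Chow--Rashevsky.
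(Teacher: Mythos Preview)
Your strategy is the paper's strategy: invoke Chow--Rashevsky, compute $[\xi_x,\xi_y]=(2\lambda\,x\times y,\lambda^2[L_x,L_y])$, and then reduce to a finite linear-algebra check using the decomposition $\mathfrak{so}(7)=\mathfrak{g}_2\oplus\{L_x\}$. The paper in fact shows the distribution has step~$2$ by verifying directly that the $28$ vectors $\{\xi_{e_s}\}_{s=1}^{7}\cup\{[\xi_{e_i},\xi_{e_j}]\}_{i<j}$ are linearly independent in $\mathfrak g$, via exactly the kind of basis computation you propose at the end.

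There is one slip you should fix. When you subtract $\xi_{x\times y}$ from $[\xi_x,\xi_y]/(2\lambda)$ you do \emph{not} obtain $(0,\tfrac{\lambda}{2}[L_x,L_y])$: you obtain
\[
\bigl(0,\ \tfrac{\lambda}{2}[L_x,L_y]-\lambda L_{x\times y}\bigr),
\]
so what actually lands in $\{0\}\times\mathfrak{so}(7)$ is (a multiple of) $[L_x,L_y]-2L_{x\times y}$, not $[L_x,L_y]$ itself. Consequently the ``clean'' check you describe --- that $\{L_{e_i}\}\cup\{[L_{e_i},L_{e_j}]\}$ span $\mathfrak{so}(7)$ --- is not literally the right statement: the $L_{e_i}$ are not available inside $\{0\}\times\mathfrak{so}(7)$, only inside $\mathcal D^\lambda$. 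What you need (and what the paper proves) is that the $21$ elements $[L_{e_i},L_{e_j}]-2L_{e_i\times e_j}=-3L_{e_i\times e_j}+Z(e_i,e_j)$, $i<j$, are linearly independent in $\mathfrak{so}(7)$; equivalently, that the $28$ elements $\xi_{e_s}$ and $[\xi_{e_i},\xi_{e_j}]$ are independent in $\mathfrak g$. The paper carries this out by splitting along $\mathcal D^\lambda\oplus\{(0,L_y)\}\oplus(\{0\}\times\mathfrak g_2)$ and using the explicit table of products $e_i\times e_j$. (Incidentally, with the paper's conventions the $\mathcal L$-component of $[L_x,L_y]$ is $-L_{x\times y}$, not $+L_{x\times y}$ as in your displayed identity.) Once you correct the subtracted term, your outline and the paper's proof coincide.
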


We endow $\left( G,\mathcal{D}^{\lambda }\right) $ with a left invariant
sub-Riemannian structure analogous to the the one we considered above: $%
\left\Vert \left( \left( x,\lambda L_{x}\right) \right) \right\Vert
=\left\Vert x\right\Vert $. In order to state our result on sub-Riemannian
geodesics we introduce the Lie group $G_{2}$, the automorphism group of the
octonionic cross product, which is 
\begin{equation*}
	G_{2}=\left\{ A\in GL\left( 7,\mathbb{R}\right) \mid A\left( x\right) \times
	A\left( y\right) =A\left( x\times y\right) \text{ for all }x,y\in \mathbb{R}%
	^{7}\right\} \text{.}
\end{equation*}%
See in \cite{G,BG} the role of this group in the classification of sub-Riemannian model spaces.
Let $\mathfrak{g}_{2}$ be the Lie algebra of $G_{2}$, which is contained in $\mathfrak{o%
}\left( 7\right) $. We will see in (\ref{decom}) that the operators $L_{x}$
form a subspace complementary to $\mathfrak{g}_{2}$. That is the reason why
we do not consider the group $\mathbb{R}^{7}\rtimes G_{2}$ acting on $%
\mathbb{R}^{7}$ for our purposes.

Let $Z:\mathbb{R}^{7}\times \mathbb{R}^{7}\rightarrow \mathfrak{o}\left(
7\right) $ be defined by%
\begin{equation}
	Z\left( u,v\right) =3u\wedge v-L_{u\times v}\text{,}  \label{Zxy}
\end{equation}%
where $u\wedge v\in \mathfrak{o}\left( 7\right) $ is given by $\left(
u\wedge v\right) \left( w\right) =\left\langle w,u\right\rangle
v-\left\langle w,v\right\rangle u$ (if $\left\{ u,v\right\} $ is
orthonormal, then $u\wedge v$ rotates through a right angle the plane
spanned by the vectors and vanishes on the orthogonal complement). We will
see in (\ref{decompo}) that $Z\left( u,v\right) \in \mathfrak{g}_{2}$ for
all $u,v\in \mathbb{R}^{7}$.

We have found explicitly some sub-Riemannian geodesics.

\begin{theorem}
	\label{OctoGeode}For any $x,y\in \mathbb{R}^{7}$ with $x\perp y$, the curve%
	\begin{equation*}
		\gamma _{x,y}\left( t\right) =\exp \left( t\left( x,\lambda L_{x}+Z\left(
		x,y\right) \right) \right) \exp \left( t\left( 0,-Z\left( x,y\right) \right)
		\right) 
	\end{equation*}%
	is a sub-Riemannian geodesic of $\left( \mathbb{R}^{7}\rtimes SO\left(
	7\right) ,\mathcal{D}^{\lambda }\right) $ with initial velocity $\left(
	x,\lambda L_{x}\right) $.
\end{theorem}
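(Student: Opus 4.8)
The plan is to show that $\gamma _{x,y}$ is a normal Pontryagin extremal of the left invariant sub-Riemannian structure on $\left( G,\mathcal{D}^{\lambda }\right) $, $G=\mathbb{R}^{7}\rtimes SO\left( 7\right) $; since normal extremals are automatically locally length minimizing, this proves that $\gamma _{x,y}$ is a sub-Riemannian geodesic. Recall (see \cite{RM, AgrachevLibroNuevo}) that in the left trivialization $T^{\ast }G\cong G\times \mathfrak{g}^{\ast }$ a curve $\gamma $ with $\gamma \left( 0\right) =e$ is a normal extremal if and only if there is a curve $p:\mathbb{R}\rightarrow \mathfrak{g}^{\ast }$, $p\not\equiv 0$, such that $\gamma \left( t\right) ^{-1}\dot{\gamma}\left( t\right) =u\left( t\right) $, where $u\left( t\right) \in \mathfrak{m}:=\mathcal{D}_{e}^{\lambda }$ is determined by $g_{e}\left( u\left( t\right) ,\cdot \right) =\left. p\left( t\right) \right\vert _{\mathfrak{m}}$, together with the Lie--Poisson equation $\dot{p}=\text{ad}_{u\left( t\right) }^{\ast }p$ (here $g$ denotes the sub-Riemannian metric). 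So the whole task reduces to exhibiting such a $p$ along $\gamma _{x,y}$.

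First I would settle horizontality and the initial velocity. Put $u_{0}=\left( x,\lambda L_{x}\right) \in \mathfrak{m}$ and $v_{0}=\left( 0,Z\left( x,y\right) \right) $, so that $\gamma _{x,y}$ is the product of the one-parameter subgroups $\exp \left( t\left( u_{0}+v_{0}\right) \right) $ and $\exp \left( -tv_{0}\right) $; differentiating this product gives $\gamma _{x,y}\left( t\right) ^{-1}\dot{\gamma}_{x,y}\left( t\right) =\text{Ad}\left( \exp \left( tv_{0}\right) \right) u_{0}$. By (\ref{decompo}) we have $Z\left( x,y\right) \in \mathfrak{g}_{2}$, hence $\exp \left( tv_{0}\right) =\left( 0,e^{tZ\left( x,y\right) }\right) $ with $e^{tZ\left( x,y\right) }\in G_{2}$; since every $A\in G_{2}$ satisfies $AL_{z}A^{-1}=L_{Az}$, we obtain $\text{Ad}\left( \exp \left( tv_{0}\right) \right) u_{0}=\left( e^{tZ\left( x,y\right) }x,\,\lambda L_{e^{tZ\left( x,y\right) }x}\right) \in \mathfrak{m}$. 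Thus $\gamma _{x,y}$ is horizontal, has initial velocity $u_{0}=\left( x,\lambda L_{x}\right) $, and (as $G_{2}\subset SO\left( 7\right) $) constant speed $\left\Vert x\right\Vert $. The hypothesis $x\perp y$ is only a normalization: $Z\left( x,y+sx\right) =Z\left( x,y\right) $, so $\gamma _{x,y}$ depends only on $x$ and on $y$ modulo $\mathbb{R}x$; it also simplifies the octonionic double cross product identities if one prefers a direct check.

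The substantive point is the construction of $p$. As noted in Remark \ref{Remark1}, the obstacle is that $\mathfrak{g}=\mathbb{R}^{7}\rtimes \mathfrak{o}\left( 7\right) $ admits no $\text{Ad}$-invariant \emph{nondegenerate} symmetric bilinear form (in fact the only invariant ones are multiples of the trace form on the $\mathfrak{o}\left( 7\right) $-factor, which are degenerate), so the bi-invariant pseudo-metric device behind Theorem \ref{Geo1} is not available. The resolution is that a possibly degenerate invariant form is enough: let $Q$ be the symmetric bilinear form on $\mathfrak{g}$ equal to $\mu $ times the trace form on the $\mathfrak{o}\left( 7\right) $-factor and vanishing on $\mathbb{R}^{7}$. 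Then $Q$ is $\text{Ad}$-invariant; since $\left\langle L_{z},L_{z'}\right\rangle =c\,\left\langle z,z'\right\rangle $ for a constant $c>0$, one can (using $\lambda \neq 0$) choose $\mu $ so that $\left. Q\right\vert _{\mathfrak{m}}=g_{e}$; and $Q\left( v_{0},\mathfrak{m}\right) =0$, because $Z\left( x,y\right) \in \mathfrak{g}_{2}$ while $\mathfrak{g}_{2}$ is orthogonal to $\text{span}\left\{ L_{z}\right\} $ in $\left( \mathfrak{o}\left( 7\right) ,\left\langle \cdot ,\cdot \right\rangle \right) $, these being inequivalent irreducible $G_{2}$-modules --- the splitting (\ref{decom}). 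Now put $p_{0}=Q\left( u_{0}+v_{0},\cdot \right) $ and $p\left( t\right) =\text{Ad}^{\ast }\left( \exp \left( tv_{0}\right) \right) p_{0}$. The $\text{Ad}$-invariance of $Q$ applied to $u_{0}+v_{0}$ gives $\text{ad}_{u_{0}+v_{0}}^{\ast }p_{0}=0$; combined with the equivariance of the coadjoint representation and the identity $u\left( t\right) +v_{0}=\text{Ad}\left( \exp \left( tv_{0}\right) \right) \left( u_{0}+v_{0}\right) $, this yields the Lie--Poisson equation $\dot{p}=\text{ad}_{u\left( t\right) }^{\ast }p$; and $\left. p\left( t\right) \right\vert _{\mathfrak{m}}$ corresponds under $Q$ to $u\left( t\right) $ because $\exp \left( tv_{0}\right) $ preserves $\mathfrak{m}$ and $\left. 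Q\right\vert _{\mathfrak{m}}$ and annihilates $v_{0}$ against $\mathfrak{m}$. Finally $p_{0}\neq 0$ whenever $x\neq 0$ (and $\gamma _{x,y}$ is the constant curve when $x=0$), so $\gamma _{x,y}$ is a bona fide normal extremal, hence a sub-Riemannian geodesic with initial velocity $\left( x,\lambda L_{x}\right) $.

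I expect the main difficulty to be precisely this: finding the correct degenerate substitute $Q$ for a bi-invariant metric and recognizing that the ``vertical'' generator forcing the extremal to close up into a product of two exponentials is $Z\left( x,y\right) =3\,x\wedge y-L_{x\times y}$. All three verifications above --- horizontality, the duality $p\leftrightarrow u$, and the Lie--Poisson equation --- rest on the same algebraic facts: $Z\left( x,y\right) \in \mathfrak{g}_{2}$, the orthogonal decomposition $\mathfrak{o}\left( 7\right) =\mathfrak{g}_{2}\oplus \text{span}\left\{ L_{z}\right\} $, and $AL_{z}A^{-1}=L_{Az}$ for $A\in G_{2}$, i.e. exactly the structure recorded around (\ref{decom}) and (\ref{decompo}).
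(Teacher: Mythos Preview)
Your proof is correct and takes a genuinely different route from the paper's.

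The paper proceeds via Corollary~\ref{CoroToth} (T\'oth's criterion): it chooses an explicit covector $\alpha =\delta _{1}+c\nu _{1}+d\nu _{3}$ in a basis adapted to the splitting $\mathfrak{g}=\mathcal{D}^{\lambda }\oplus \mathcal{V}\oplus (\{0\}\times \mathfrak{g}_{2})$, and then verifies by direct computation that $b(\alpha )=X-Z$, $\alpha \circ \mathrm{ad}_{X}=0$ on $\mathfrak{g}$ and $\alpha \circ \mathrm{ad}_{Z}=0$ on $\mathfrak{g}_{2}$, solving for $c,d$ at the end. The paper explicitly remarks that this is laborious because no \emph{nondegenerate} bi-invariant metric on $\mathbb{R}^{7}\rtimes SO(7)$ was found.

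Your key observation is that nondegeneracy is unnecessary: the (unique up to scale) Ad-invariant symmetric form $Q$ concentrated on the $\mathfrak{o}(7)$-factor is degenerate but still satisfies the two conditions that make the bi-invariant-metric argument work, namely $Q|_{\mathfrak{m}\times \mathfrak{m}}=g_{e}$ (after scaling, using $\lambda \neq 0$) and $Q(v_{0},\mathfrak{m})=0$ (because $\mathfrak{g}_{2}\perp \{L_{z}\}$). With $p(t)=Q(u(t)+v_{0},\cdot )$ the Lie--Poisson equation and the control condition follow formally from Ad-invariance, with essentially no computation. This is more conceptual than the paper's proof and in fact yields more: your argument shows that $\exp (t(x,\lambda L_{x}+W))\exp (-t(0,W))$ is a geodesic for \emph{every} $W\in \mathfrak{g}_{2}$, not just $W=Z(x,y)$. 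The two initial momenta --- the paper's $\alpha $, which vanishes on $\{0\}\times \mathfrak{g}_{2}$, and your $p_{0}$, which does not --- are different, but both are legitimate lifts of the same geodesic, since they agree on $\mathfrak{m}$ for all $t$.

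Two small remarks. First, the reference to Remark~\ref{Remark1} is misplaced; the relevant comment about the absence of a bi-invariant metric is the unnumbered Remark following Theorem~\ref{OctoGeode}. Second, you tacitly read the second exponential in the statement as $\exp (-t(0,Z(x,y)))$; this is the only reading consistent with the claimed initial velocity $(x,\lambda L_{x})$ and with the paper's own proof via Corollary~\ref{CoroToth}, so the sign in the displayed statement is a typo.
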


\begin{remark} 		This theorem provides a nontrivial application of \cite{Toth} 
	and \cite{AP}. They give conditions 
	for the homogeneity of sub-Riemannian geodesics without resorting to the approach used in
	the concrete examples in the books and papers cited in Remark \emph{\ref{Remark1}}. They do not invoke
	a suitable taming \emph{(}pseudo\emph{)}-Riemannnian metric on the manifold, whose geodesics 
	are known explicitly \emph{(}see \cite{GMG}\emph{)}. For the screw octonionic system  we have not been able to find such a metric on $\mathbb{R}^{7}\rtimes SO\left( 7\right) $; in particular,  we have
	not succeeded in using the usual tools to find a bi-invariant metric or prove its nonexistence 
	\cite{O}.  
	
	We do not know whether the remaining geodesics through the
	identity are also orbits of monoparametric subgroups of isometries of the
	sub-Riemannian structure; for general left invariant metrics, those
	geodesics are rather the exception.
\end{remark}

In contrast with the Riemannian case, sub-Riemannian geodesics are not
determined by their initial velocities, but by their initial momenta in $\mathfrak{g}^{\ast }$. 
In Proposition \ref{momentumO} below
we see to which of them the
geodesics in the theorem are associated.

\smallskip

The third author thanks Jorge Lauret, who many years ago made him aware of 
the fact that dimension 3 is not necessary for having a nice correspondence $%
L$ as in this paper.

\section{Controllability of $\left( G_{K,k},\mathcal{D}^{\protect\lambda %
	}\right) $ and its sub-Riemannian geodesics\label{S1}}

\subsection{A unified presentation of the Lie algebras of $G_{K,k}$}

Let $K$ be, as before, a connected compact semisimple Lie group with Lie
algebra $\mathfrak{k}$. Let $\mathfrak{g}_{\mathfrak{k},k}$ be the Lie
algebra of $G_{K,k}$ as in (\ref{GkK}), that is, 
\begin{equation*}
	\mathfrak{g}_{\mathfrak{k,}1}=\mathfrak{k\times k}\text{,\ \ \ \ \ \ \ \ }%
	\mathfrak{g}_{\mathfrak{k},-1}=\mathfrak{k+ik}\text{\ \ \ \ \ \ \ \ and\ \ \
		\ \ \ \ \ }\mathfrak{g}_{\mathfrak{k,}0}=\mathfrak{k\rtimes }_{\text{ad}}%
	\mathfrak{k}\text{.}
\end{equation*}
We fix $K$ and $\mathfrak{k}$, so we can omit referring to them in the
notation if there is not danger of confusion.

For each $k=1,-1,0$, we present in Proposition \ref{identif} below an
isomorphism of $\mathfrak{g}_{\mathfrak{k},k}$ with a Lie algebra $\mathfrak{%
	k}_{k}$, which will allow us to handle the three cases simultaneously more
easily. Let $\mathfrak{k}_{k}$ be the direct sum $\mathfrak{k}\oplus 
\mathfrak{k}$ endowed with the bracket%
\begin{equation}
	\left[ \left( x,y\right) ,\left( u,v\right) \right] _{k}=\left( \left[ x,v%
	\right] +\left[ y,u\right] ,\left[ y,v\right] +k\left[ x,u\right] \right) 
	\text{.}  \label{corchete_k}
\end{equation}%
We comment that it is isomorphic to $\left\{ \left( 
\begin{array}{cc}
	y & kx \\ 
	x & y%
\end{array}%
\right) \mid x,y\in \mathfrak{k}\right\} $ with the commutator as the
bracket.

\begin{proposition}
	\label{identif}For each $k=1,-1,0$, $\mathfrak{k}_{k}$ is a Lie algebra and
	there exists a Lie algebra isomorphism $T_{k}:\mathfrak{k}_{k}\rightarrow 
	\mathfrak{g}_{k}$ such that $T_{k}\left( \mathfrak{k\times }\left\{
	0\right\} \right) =\mathfrak{p}$, $T_{k}\left( \left\{ 0\right\} \times 
	\mathfrak{k}\right) =\mathfrak{k}$ and $T_{k}\left( \mathcal{E}^{\lambda
	}\right) =\mathcal{D}^{\lambda }$, where $\mathcal{E}^{\lambda }=\left\{
	\left( x,\lambda x\right) \mid x\in \mathfrak{k}\right\} $.
\end{proposition}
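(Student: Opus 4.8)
The plan is to write down the isomorphism $T_k$ explicitly in each of the three cases and then check, by a short direct computation, that it intertwines the brackets. Once that is done the assertion that $\mathfrak{k}_k$ is a Lie algebra comes for free: a linear bijection carrying the (manifestly anticommutative) product $[\cdot,\cdot]_k$ to the Lie bracket of the known Lie algebra $\mathfrak{g}_k$ automatically transports the Jacobi identity back to $\mathfrak{k}_k$.

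Concretely, I would set
\[
T_1(x,y)=(x+y,\,y-x)\in\mathfrak{k}\times\mathfrak{k},\qquad T_{-1}(x,y)=y+ix\in\mathfrak{k}+i\mathfrak{k},\qquad T_0(x,y)=(x,y)\in\mathfrak{k}\rtimes_{\text{ad}}\mathfrak{k}.
\]
Each $T_k$ is visibly a linear isomorphism. Reading off the table, $T_1(x,0)=(x,-x)\in\Delta_-(\mathfrak{k})=\mathfrak{p}$ and $T_1(0,y)=(y,y)\in\Delta_+(\mathfrak{k})=\mathfrak{h}$; $T_{-1}(x,0)=ix\in i\mathfrak{k}=\mathfrak{p}$ and $T_{-1}(0,y)=y\in\mathfrak{k}=\mathfrak{h}$; and $T_0$ is the identity on each summand. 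Hence $T_k(\mathfrak{k}\times\{0\})=\mathfrak{p}$ and $T_k(\{0\}\times\mathfrak{k})=\mathfrak{h}$. For the distribution, $T_1(x,\lambda x)=((1+\lambda)x,(\lambda-1)x)=(x,-x)+\lambda(x,x)$, i.e. $X+\lambda L_X$ with $X=(x,-x)$; $T_{-1}(x,\lambda x)=ix+\lambda x=ix+\lambda L_{ix}$; and $T_0(x,\lambda x)=(x,0)+\lambda(0,x)=X+\lambda L_X$ with $X=(x,0)$. So $T_k(\mathcal{E}^\lambda)=\mathcal{D}_e^\lambda$ in all three cases.

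What remains is to verify $T_k([\,\cdot\,,\,\cdot\,]_k)=[T_k(\cdot),T_k(\cdot)]$. For $k=0$ this is a tautology, since $[\cdot,\cdot]_0$ is by definition the bracket of $\mathfrak{k}\rtimes_{\text{ad}}\mathfrak{k}$. For $k=-1$, expanding $[\,y+ix,\,v+iu\,]=([y,v]-[x,u])+i([y,u]+[x,v])$ and comparing with $T_{-1}$ applied to $[(x,y),(u,v)]_{-1}=([x,v]+[y,u],\,[y,v]-[x,u])$ matches at once. For $k=1$ one expands the componentwise bracket $[(x+y,\,y-x),(u+v,\,v-u)]$ using bilinearity of $[\cdot,\cdot]$ on $\mathfrak{k}$ and checks it equals $T_1$ of $[(x,y),(u,v)]_1=([x,v]+[y,u],\,[y,v]+[x,u])$. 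This last line is the only mildly delicate point, but it is a single-step sign-bookkeeping computation with no conceptual content, so I do not anticipate a genuine obstacle anywhere in the argument.

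A cleaner way to see why all of this works — and the reason to expect a uniform statement at all — is that $\mathfrak{k}_k$ is precisely the scalar extension $\mathfrak{k}\otimes_{\mathbb{R}}A_k$, where $A_k=\mathbb{R}[t]/(t^2-k)$ is a commutative $\mathbb{R}$-algebra and the bracket is $[x\otimes a,\,y\otimes b]=[x,y]\otimes ab$: writing $y\otimes 1+x\otimes t\leftrightarrow(x,y)$ and using $t^2=k$ reproduces $[\cdot,\cdot]_k$ verbatim. Since $A_1\cong\mathbb{R}\times\mathbb{R}$, $A_{-1}\cong\mathbb{C}$ and $A_0$ is the algebra of dual numbers, the corresponding extensions are canonically $\mathfrak{k}\times\mathfrak{k}$, the complexification $\mathfrak{k}+i\mathfrak{k}$, and the tangent Lie algebra $\mathfrak{k}\rtimes_{\text{ad}}\mathfrak{k}$ (with $\mathfrak{k}\otimes t$ spanning the abelian ideal); under these identifications $\mathfrak{k}\otimes t\mapsto\mathfrak{p}$, $\mathfrak{k}\otimes 1\mapsto\mathfrak{h}$ and $\mathfrak{k}\otimes(t+\lambda)\mapsto\mathcal{D}_e^\lambda$, which recovers the maps $T_k$ above. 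The verification is elementary in either approach; the content of the proposition is just the observation that the three geometrically distinct situations admit this single algebraic model.
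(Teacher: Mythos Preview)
Your proof is correct and follows the same route as the paper: define $T_k$ explicitly in each case and check that it intertwines the brackets (your $T_1(x,y)=(x+y,\,y-x)$ is what the paper intends---the formula $\tfrac12(x-y,x+y)$ printed there is actually the inverse map and does not send $\mathfrak{k}\times\{0\}$ to $\Delta_-(\mathfrak{k})$). The closing paragraph realizing $\mathfrak{k}_k$ as $\mathfrak{k}\otimes_{\mathbb{R}}\mathbb{R}[t]/(t^2-k)$ is a pleasant unifying remark that goes beyond what the paper offers.
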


\begin{proof}
	We already know that the Lie algebra of $\mathfrak{k}\rtimes _{\text{Ad}}K$
	is $\mathfrak{k}_{0}$. It is not difficult to show that $T_{1}:\mathfrak{k}%
	_{k}\rightarrow \mathfrak{k}\oplus \mathfrak{k}$, and $T_{-1}:\mathfrak{k}%
	_{k}\rightarrow \mathfrak{k}^{\mathbb{C}}$ given by%
	\begin{equation*}
		T_{1}\left( x,y\right) =\tfrac{1}{2}\left( x-y,x+y\right) \text{,\ \ \ \ \ \
			\ }T_{-1}\left( x,y\right) =y+ix
	\end{equation*}%
	satisfy the required conditions.
\end{proof}


We have introduced the family $\mathfrak{g}_{\mathfrak{k,}k}$ of Lie
algebras, together with the operators $L$, spread out over the three rows of
Table \ref{table}, appealing to familiar geometric objects. Next we
characterize succinctly their elements.

\begin{proposition}
	Let $\left( \mathfrak{g,k}\right) $ be a symmetric pair of Lie algebras and
	let $\mathfrak{g}=\mathfrak{p}\oplus \mathfrak{k}$ be the associated Cartan
	decomposition. Let $k=1,-1,0$ and suppose that there exists a linear
	isomorphism $L:\mathfrak{p}\rightarrow \mathfrak{k}$ such that%
	\begin{equation}
		k\left[ L\left( x\right) ,L\left( y\right) \right] =\left[ x,y\right] \text{%
			\ \ \ \ \ \ and\ \ \ \ \ \ }\left[ L\left( x\right) ,z\right] =L\left[ x,z%
		\right]  \label{displayIso}
	\end{equation}%
	for all $x,y\in \mathfrak{p}$, $z\in \mathfrak{k}$. Then $\phi :\mathfrak{g}%
	\rightarrow \mathfrak{k}_{k}$, $\phi \left( x+z\right) =\left( L\left(
	x\right) ,z\right) $, is a Lie algebra isomorphism. Conversely, the
	operators $L$ in the last column of the table in \emph{(\ref{table})}
	satisfy the identities \emph{(\ref{displayIso})}.
\end{proposition}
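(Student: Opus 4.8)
The plan is to verify the two displayed identities in \eqref{displayIso} directly for each of the three operators $L$ in Table \eqref{table}, and then, for the forward direction, to check that the map $\phi(x+z)=(L(x),z)$ intertwines the bracket of $\mathfrak{g}$ with the bracket $[\,\cdot\,,\,\cdot\,]_k$ of $\mathfrak{k}_k$ given by \eqref{corchete_k}. Since $L$ is assumed to be a linear isomorphism and the Cartan decomposition gives $\mathfrak{g}=\mathfrak{p}\oplus\mathfrak{k}$ as a direct sum of vector spaces, $\phi$ is automatically a linear isomorphism; the only content is that it is a Lie algebra homomorphism.

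For the homomorphism property, I would split the bracket computation according to the three types of arguments, using that $\mathfrak{g}=\mathfrak{p}\oplus\mathfrak{k}$ is a symmetric pair, so $[\mathfrak{k},\mathfrak{k}]\subseteq\mathfrak{k}$, $[\mathfrak{k},\mathfrak{p}]\subseteq\mathfrak{p}$ and $[\mathfrak{p},\mathfrak{p}]\subseteq\mathfrak{k}$. Writing $\phi(x+z)=(L(x),z)$ with $x\in\mathfrak{p}$, $z\in\mathfrak{k}$, one has to show
\[
\phi\bigl([x_1+z_1,x_2+z_2]\bigr)=\bigl[(L(x_1),z_1),(L(x_2),z_2)\bigr]_k .
\]
The left side is $\phi\bigl([z_1,z_2]+[x_1,z_2]+[z_1,x_2]+[x_1,x_2]\bigr)=\bigl(L([x_1,z_2]+[z_1,x_2]),\,[z_1,z_2]+[x_1,x_2]\bigr)$, while the right side, by \eqref{corchete_k}, is $\bigl([L(x_1),z_2]+[z_1,L(x_2)],\,[z_1,z_2]+k[L(x_1),L(x_2)]\bigr)$. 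Comparing the $\mathfrak{k}$-component (first slot) uses the second identity in \eqref{displayIso}, $[L(x),z]=L[x,z]$ (and its consequence $[z,L(x)]=L[z,x]$), and comparing the $\mathfrak{k}$-valued second slot uses the first identity, $k[L(x),L(y)]=[x,y]$. So the homomorphism property is exactly equivalent to \eqref{displayIso}, and the forward direction is essentially a bookkeeping exercise once the two identities are granted.

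The substantive part is therefore the converse: checking \eqref{displayIso} for each row of the table. For $M=K$, $\mathfrak{p}=\Delta_-(\mathfrak{k})$, $\mathfrak{k}$-part $=\Delta_+(\mathfrak{k})$, $L(x,-x)=(x,x)$, $k=1$: with $X=(x,-x)$, $Y=(y,-y)$ one computes $[X,Y]=([x,y],[x,y])=([x,y],-[-x,-y])\in\Delta_+$ — wait, more carefully, in $\mathfrak{k}\times\mathfrak{k}$, $[(x,-x),(y,-y)]=([x,y],[-x,-y])=([x,y],[x,y])$, which equals $L^{-1}$ applied to... one checks directly that $[L(X),L(Y)]=[(x,x),(y,y)]=([x,y],[x,y])=[X,Y]$, giving the first identity with $k=1$; the second is the analogous componentwise check. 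For $M=K^{\mathbb C}/K$, $\mathfrak{p}=i\mathfrak{k}$, $L(ix)=x$, $k=-1$: here $[ix,iy]=-[x,y]$ in $\mathfrak{k}^{\mathbb C}$, so $-1\cdot[L(ix),L(iy)]=-[x,y]=[ix,iy]$, and $[L(ix),z]=[x,z]$ while $[ix,z]=i[x,z]$, so $L[ix,z]=[x,z]$, confirming both. For $M=\mathfrak{k}$, $\mathfrak{p}=\mathfrak{k}\times\{0\}$, $L(x,0)=(0,x)$, $k=0$, with the semidirect bracket $[(x,y),(u,v)]=([x,v]+[y,u],[y,v])$: then $[(x,0),(u,0)]=(0,0)$ and $0\cdot[L(x,0),L(u,0)]=0$, so the first identity holds trivially; and $[L(x,0),(0,z)]=[(0,x),(0,z)]=(0,[x,z])=L([x,z],0)=L[(x,0),(0,z)]$, giving the second. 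I expect the main obstacle to be purely notational: keeping the several product/semidirect/complexified bracket conventions straight and being careful with signs (especially the $k=-1$ case, where the factor of $i$ flips a sign), rather than any genuine mathematical difficulty.
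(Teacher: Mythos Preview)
Your proposal is correct and follows essentially the same approach as the paper: you use the Cartan decomposition inclusions to split the bracket, reduce the homomorphism property to the two identities in \eqref{displayIso}, and then verify those identities case by case for the three rows of the table. The paper's proof does exactly this, only much more tersely---it dispatches the converse with the phrase ``straightforward computations yield the assertion about the operators $L$'', whereas you actually carry them out.
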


\begin{proof}
	Recall that $\left[ \mathfrak{p},\mathfrak{p}\right] \subset \mathfrak{k}$, $%
	\left[ \mathfrak{p},\mathfrak{k}\right] \subset \mathfrak{p}$, since $%
	\mathfrak{g}=\mathfrak{p}\oplus \mathfrak{k}$ is a Cartan decomposition for
	a symmetric pair $\left( \mathfrak{g,k}\right) $. By the hypothesis we have%
	\begin{equation*}
		L\left( \left[ x,w\right] +\left[ z,y\right] \right) =\left[ L\left(
		x\right) ,w\right] +L\left[ z,y\right] =\left[ L\left( x\right) ,w\right] -%
		\left[ L\left( y\right) ,z\right] \text{.}
	\end{equation*}%
	Hence $\phi \left[ x+z,y+w\right] =\left[ \phi \left( x+z\right) ,\phi
	\left( y+w\right) \right] _{k}$. Straightforward computations yield the
	assertion about the operators $L$.
\end{proof}


\subsection{Controllability of the system $\left( G_{K,k},\mathcal{D}^{%
		\protect\lambda }\right) $}

Next we present the proof of the theorem giving conditions for the
controllability of the system $\left( G_{K,k},\mathcal{D}^{\lambda }\right) $%
.

\medskip

\noindent \textit{Proof of Theorem \ref{main}. }By the Chow-Rashevsky
Theorem, it suffices to show that the distribution $\mathcal{D}^{\lambda }$
is bracket generating in the stated cases. By Proposition \ref{identif}, it
is enough to verify that the subspace $\mathcal{E}^{\lambda }=\left\{ \left(
x,\lambda x\right) \mid x\in \mathfrak{k}\right\} $ of $\mathfrak{k}_{k}$
satisfies $\mathcal{E}^{\lambda }+\left[ \mathcal{E}^{\lambda },\mathcal{E}%
^{\lambda }\right] =\mathfrak{k}_{k}$. For $x,y\in \mathfrak{k}$, according
to (\ref{corchete_k}), we compute%
\begin{equation}
	\left[ \left( x,\lambda x\right) ,\left( y,\lambda y\right) \right] =\left(
	2\lambda \left[ x,y\right] ,\left( \lambda ^{2}+k\right) \left[ x,y\right]
	\right) \text{.}  \label{corcheteD}
\end{equation}

\noindent Now, $\left[ \mathfrak{k,k}\right] =\mathfrak{k}$, since $%
\mathfrak{k}$ is semisimple. Hence, the subspace $\left\{ \left( 2\lambda
z,\left( \lambda ^{2}+k\right) z\right) \mid z\in \mathfrak{k}\right\} $ is
contained in $\left[ \mathcal{E}^{\lambda },\mathcal{E}^{\lambda }\right] .$
A dimension counting argument implies that we need only to see that its
intersection with $\mathcal{E}^{\lambda }$ is trivial. We have that $\left(
u,\lambda u\right) =\left( 2\lambda z,\left( \lambda ^{2}+k\right) z\right) $
implies that $\left( \lambda ^{2}-k\right) z=0$. Consequently, $\mathcal{E}%
^{\lambda }$ is bracket generating unless either $k=1$ and $\kappa =\pm 1$,
or $k=0$ and $\lambda =0$, as desired. \hfill $\square $

\medskip

\noindent \textit{Proof of Proposition \ref{ControlR3}. }It is a corollary
of Theorem \ref{main} for $\mathfrak{k}=\mathfrak{o}\left( 3\right) $, since
in this case $\mathfrak{k}\oplus \mathfrak{k}$, $\mathfrak{k}^{\mathbb{C}}$
and $\mathfrak{k\rtimes }_{\text{ad}}\mathfrak{k}$ are isomorphic as Lie
algebras to $\mathfrak{o}\left( 4\right) ,$ $\mathfrak{o}\left( 1,3\right) $
and $\mathbb{R}^{3}\rtimes \mathfrak{o}\left( 3\right) $, respectively.
\hfill $\square $

\subsection{Sub-Riemannian geodesics of $\left( G_{K,k},\mathcal{D}^{\protect%
		\lambda }\right) $}

We begin by introducing a bi-invariant metric on $G_{K,k}$ which will be
useful to prove the theorem giving explicitly the sub-Riemannian geodesics
of $\left( G_{K,k},\mathcal{D}^{\lambda }\right) $.

\begin{proposition}
	For $\lambda \in \mathbb{R}$ and $k=1,-1,0$, the inner product $g_{\lambda
		,k}$ on $\mathfrak{k}_{k}$ defined by%
	\begin{equation}
		g_{\lambda ,k}\left( \left( x,y\right) ,\left( u,v\right) \right) =\lambda
		\left\langle x,v\right\rangle +\lambda \left\langle y,u\right\rangle
		-\left\langle y,v\right\rangle -k\left\langle x,u\right\rangle  \label{gBi}
	\end{equation}%
	is bi-invariant and it is degenerate if and only if $\lambda ^{2}=k$.
\end{proposition}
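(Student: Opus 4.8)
The plan is to verify bi-invariance and then analyze degeneracy directly from the explicit formula. Since $G_{K,k}$ is connected, bi-invariance of the induced left-invariant metric is equivalent to $\mathrm{ad}$-invariance of $g_{\lambda,k}$ on the Lie algebra $\mathfrak{k}_k$, i.e.\ to the identity $g_{\lambda,k}([\xi,\eta],\zeta)+g_{\lambda,k}(\eta,[\xi,\zeta])=0$ for all $\xi,\eta,\zeta\in\mathfrak{k}_k$. First I would note that it suffices to check this for $\xi,\eta,\zeta$ each of the form $(x,0)$ or $(0,y)$; by multilinearity this reduces to a handful of cases, each of which unwinds using the bracket formula \eqref{corchete_k} and the $\mathrm{ad}$-invariance of the canonical inner product $\langle\cdot,\cdot\rangle$ on $\mathfrak{k}$ (which holds because it is, up to sign, the Killing form). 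For instance, taking $\xi=(x,0)$, one computes $[\xi,(u,v)]=([x,v],k[x,u])$ and checks that the terms pair off correctly thanks to $\langle[x,u],w\rangle=-\langle u,[x,w]\rangle$; the factor $k$ appears symmetrically in the relevant terms of \eqref{gBi} so it cancels consistently. I expect this to be a short, mechanical computation once the reduction to the four or five generating cases is made.

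For degeneracy, I would write $g_{\lambda,k}$ as a quadratic form on $\mathfrak{k}\oplus\mathfrak{k}$ in block form. Using the decomposition $(x,y)$, the Gram "matrix" with respect to the canonical inner product on $\mathfrak{k}$ is
\begin{equation*}
\begin{pmatrix} -k\,\mathrm{Id} & \lambda\,\mathrm{Id} \\ \lambda\,\mathrm{Id} & -\mathrm{Id} \end{pmatrix},
\end{equation*}
acting blockwise. A vector $(x,y)$ lies in the radical iff $-kx+\lambda y=0$ and $\lambda x-y=0$, i.e.\ iff $y=\lambda x$ and $-kx+\lambda^2 x=0$, that is $(\lambda^2-k)x=0$. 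Hence the radical is nonzero precisely when $\lambda^2=k$, in which case it is exactly $\mathcal{E}^{\lambda}=\{(x,\lambda x)\}$; and when $\lambda^2\neq k$ the form is nondegenerate. Strictly speaking, since $\dim\mathfrak{k}\ge 1$, the scalar condition $\lambda^2=k$ is both necessary and sufficient, which is the claim. (One may also phrase this via the $2\times2$ reduction: the form is a tensor product of the canonical positive-definite form on $\mathfrak{k}$ with the quadratic form on $\mathbb{R}^2$ having matrix as above, whose determinant is $k-\lambda^2$.)

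The only mildly delicate point—and the main thing to be careful about—is the bi-invariance verification: one must confirm that the cross terms $\lambda\langle x,v\rangle+\lambda\langle y,u\rangle$ and the diagonal terms $-\langle y,v\rangle-k\langle x,u\rangle$ interact correctly with the mixed bracket $([x,v]+[y,u],[y,v]+k[x,u])$, in particular that the two occurrences of $k$ (one in the bracket's second slot, one in the form's last term) are consistent. I would organize the check by isolating the coefficient of $\lambda$ and the $\lambda$-free part separately: the $\lambda$-linear part of $g_{\lambda,k}([\xi,\eta],\zeta)+g_{\lambda,k}(\eta,[\xi,\zeta])$ must vanish, and so must the $\lambda$-free part, each reducing to the invariance identity for $\langle\cdot,\cdot\rangle$ on $\mathfrak{k}$. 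Everything else is routine bookkeeping, so no genuine obstacle is anticipated.
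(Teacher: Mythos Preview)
Your proposal is correct and follows essentially the same approach as the paper: verify $\mathrm{ad}$-invariance via the invariance identity for $\langle\cdot,\cdot\rangle$ on $\mathfrak{k}$, and determine the radical by solving the two linear equations obtained from pairing against $(x,0)$ and $(0,y)$. Your block-matrix/tensor-product phrasing of the degeneracy analysis is a tidy repackaging of the paper's computation (which plugs in $(x,0)$ and $(0,y)$ directly), and the paper streamlines the bi-invariance check to the single identity $g_{\lambda,k}\big([(x,y),(u,v)]_k,(u,v)\big)=0$ rather than splitting into cases, but the substance is the same.
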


\begin{proof}
	Since the inner product on $\mathfrak{k}$ is bi-invariant by hypothesis, $%
	\left\langle \left[ x,u\right] ,v\right\rangle +\left\langle u,\left[ x,v%
	\right] \right\rangle $ holds for all $x,u,v\in \mathfrak{k}$. A
	straightforward computation using (\ref{corchete_k}) yields then that $%
	g_{\lambda ,k}\left( \left[ \left( x,y\right) ,\left( u,v\right) ,\right]
	_{k},\left( u,v\right) \right) =0$ for all $x,y,u,v\in \mathfrak{k}$. This
	implies the first assertion.
	
	If $\lambda ^{2}=k$ on sees that $g_{\lambda ,k}\left( \left( x,y\right)
	,\left( u,\lambda u\right) \right) =0$ for all $x,y,u$, and so the inner
	product is degenerate. Conversely, suppose that there exists $\left(
	u,v\right) \neq 0$ which is orthogonal to any $\left( x,y\right) \in 
	\mathfrak{k}_{k}$. Plugging in (\ref{gBi}) arbitrary non-zero elements $%
	\left( x,0\right) $ and $\left( 0,y\right) $ we have%
	\begin{equation*}
		0=\lambda \left\langle x,v\right\rangle -k\left\langle x,u\right\rangle
		=\left\langle x,\lambda v-ku\right\rangle \text{ \ \ \ and \ \ \ }0=\lambda
		\left\langle y,u\right\rangle -\left\langle y,v\right\rangle =\left\langle
		y,\lambda u-v\right\rangle \text{,}
	\end{equation*}%
	respectively. Hence, $ku=\lambda v=\lambda ^{2}u$, since the inner product
	on $\mathfrak{k}$ is nondegenerate. Now, $u\neq 0$ (otherwise, $\left(
	u,v\right) =0$) and so $\lambda ^{2}=k$, as desired.
\end{proof}

\begin{remark}
	\label{NabN}We have presented sub-Riemannian geodesics in the natural manner
	through the local length minimization property. When it comes to study them,
	one is compelled to consider two types, normal and abnormal geodesics \emph{(%
	}see for instance \emph{\cite{RM})}. We will use two results that give \emph{%
		normal} geodesics, namely, Proposition \emph{\ref{PropRM}} and Corollary \emph{\ref%
	{CoroToth}}. However, in our situations,  Theorem \emph{\ref{Geo1}} and
	Theorem \emph{\ref{OctoGeode}}, we do not need to care about abnormal geodesics,
	since in both cases the distributions have step 2 \emph{(}by the proofs of
	the controllability of the corresponding systems\emph{)} and this implies
	that if there exist abnormal geodesic, they must be normal as well \emph{(}%
	see for example \emph{20.5.1} of \emph{\cite{Alibro})}.
\end{remark}

The explicit form of the sub-Riemannian geodesics will follow from
Proposition 11.19 in \cite{RM} (see other references with similar statements
in Remark \ref{Remark1}):

\begin{proposition}
	\label{PropRM}\emph{\cite{RM} }Let $G$ be Lie group endowed with a
	bi-invariant pseudo-Rieman\-nian metric $g$. Let $K$ be a closed subgroup of 
	$G$ and let $\mathfrak{k}$ and $\mathfrak{g}$ be their respective Lie
	algebras. Suppose that $\mathfrak{d}=\mathfrak{k}^{\perp }$ is bracket
	generating and the metric on $\mathfrak{d}$ is positive definite. Then all
	the normal sub-Riemannian geodesics of the sub-Riemannian manifold $\left( G,%
	\mathfrak{d},\left. g\right\vert _{\mathfrak{d}\times \mathfrak{d}}\right) $
	through the identity have the form%
	\begin{equation*}
		t\mapsto \exp \left( t\left( u+z\right) \right) \exp \left( -tz\right)\text{,%
		}
	\end{equation*}%
	with $u\in \mathfrak{d}$ and $z\in \mathfrak{k}$.
\end{proposition}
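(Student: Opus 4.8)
The plan is to characterize the normal geodesics through the Hamiltonian formalism and then exploit the bi-invariant metric to linearize the resulting equations. First I would invoke the standard description of normal sub-Riemannian geodesics as projections of integral curves of the Hamiltonian vector field associated with the normal Hamiltonian $H(\xi)=\tfrac12\langle\xi,v(\xi)\rangle$, where, for a covector $\xi$, the element $v(\xi)\in\mathfrak{d}$ is the unique horizontal vector with $\langle\xi,w\rangle=g(v(\xi),w)$ for all $w\in\mathfrak{d}$; this uses that $g|_{\mathfrak{d}}$ is positive definite, in particular nondegenerate. Left-trivializing $T^{*}G\cong G\times\mathfrak{g}^{*}$ reduces the flow to the reconstruction equation $\gamma^{-1}\dot\gamma=u$, with control $u=v(\xi)\in\mathfrak{d}$, coupled with the Lie--Poisson momentum equation $\dot\xi=\operatorname{ad}^{*}_{u}\xi$ on $\mathfrak{g}^{*}$.

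The key simplification comes from the bi-invariant pseudo-metric $g$: being nondegenerate and $\operatorname{ad}$-invariant, it furnishes a linear isomorphism $\mathfrak{g}\to\mathfrak{g}^{*}$, $\Lambda\mapsto g(\Lambda,\cdot)$, and a short computation shows it intertwines $\operatorname{ad}^{*}_{u}$ with $\Lambda\mapsto[\Lambda,u]$. Transporting $\xi$ to $\Lambda\in\mathfrak{g}$, the momentum equation becomes $\dot\Lambda=[\Lambda,u]$. Since $g|_{\mathfrak{d}}$ is nondegenerate one gets the $g$-orthogonal splitting $\mathfrak{g}=\mathfrak{d}\oplus\mathfrak{k}$ with $\mathfrak{d}^{\perp}=\mathfrak{k}$, and then $v(\xi)$ is simply the $\mathfrak{d}$-component of $\Lambda$, i.e.\ $u=\Lambda_{\mathfrak{d}}$. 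I would next record the structural fact $[\mathfrak{k},\mathfrak{d}]\subseteq\mathfrak{d}$, which follows at once from $\operatorname{ad}$-invariance of $g$ together with $\mathfrak{k}$ being a subalgebra: for $z\in\mathfrak{k}$, $w\in\mathfrak{d}$, $x\in\mathfrak{k}$ one has $g([z,w],x)=-g(w,[z,x])=0$. Writing $\Lambda=u+z$ with $u=\Lambda_{\mathfrak{d}}$ and $z=\Lambda_{\mathfrak{k}}$, the momentum equation reads $\dot\Lambda=[z,u]\in\mathfrak{d}$, so its $\mathfrak{k}$-part vanishes: $z$ is \emph{constant}, and $\dot u=[z,u]$.

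Finally I would integrate. With $z$ constant, the equation $\dot u=\operatorname{ad}(z)u$ gives $u(t)=\operatorname{Ad}(\exp tz)\,u_{0}$, where $u_{0}=u(0)\in\mathfrak{d}$. To recover $\gamma$, I would verify by direct differentiation that $\gamma(t)=\exp\!\big(t(u_{0}+z)\big)\exp(-tz)$ solves the reconstruction equation: since $u_{0}+z$ commutes with $\exp(t(u_{0}+z))$, one computes $\gamma^{-1}\dot\gamma=\operatorname{Ad}(\exp tz)\,u_{0}=u(t)$, while $\gamma(0)=e$. Uniqueness of solutions of the reconstruction ODE then shows that every normal geodesic through the identity has the asserted form $\exp(t(u+z))\exp(-tz)$ with $u\in\mathfrak{d}$ and $z\in\mathfrak{k}$.

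I expect the main difficulty to be bookkeeping rather than conceptual: correctly identifying $v(\xi)$ with the $\mathfrak{d}$-projection $\Lambda_{\mathfrak{d}}$ under the metric isomorphism, and pinning down the sign conventions relating $\operatorname{ad}^{*}$ to the bracket so that the $\mathfrak{k}$-component of the momentum genuinely drops out. The hypotheses enter precisely here: positive definiteness of $g|_{\mathfrak{d}}$ makes $v(\xi)$ well defined and the Hamiltonian smooth, and guarantees the splitting $\mathfrak{g}=\mathfrak{d}\oplus\mathfrak{k}$, while the bracket-generating assumption ensures (via Chow--Rashevsky) that the sub-Riemannian distance, and hence the very notion of geodesic, is meaningful; neither is needed for the closed-form integration in the last step.
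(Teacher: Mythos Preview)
The paper does not supply its own proof of this proposition; it is quoted as Proposition~11.19 of Montgomery's book \cite{RM} and used as a black box. Your argument is correct and is essentially the standard one behind that reference: left-trivialize, use the bi-invariant metric to transport the Lie--Poisson equation from $\mathfrak{g}^{*}$ to $\mathfrak{g}$, observe that the $\mathfrak{k}$-component of the momentum is conserved because $[\mathfrak{k},\mathfrak{d}]\subset\mathfrak{d}$, and integrate the resulting linear equation explicitly.
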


\smallskip

\noindent \textit{Proof of Theorem \ref{Geo1}.} By Proposition \ref{identif}
we may consider $\mathfrak{k}_{k}$ and $\mathcal{E}^{\lambda }$ instead of $%
\mathfrak{g}_{k}$ and $\mathcal{D}^{\lambda }$. Let $h_{\lambda ,k}=\frac{1}{%
	\lambda ^{2}-k}~g_{\lambda ,k}$, which is a bi-invariant pseudo-Riemannian
metric on $\mathfrak{k}_{k}\equiv \mathfrak{g}_{k}$. We compute%
\begin{eqnarray*}
	\left( \lambda ^{2}-k\right) h_{\lambda ,k}\left( \left( x,\lambda x\right)
	,\left( 0,v\right) \right) &=&\lambda \left\langle x,v\right\rangle
	-\left\langle \lambda x,v\right\rangle =0\text{,} \\
	\left( \lambda ^{2}-k\right) h_{\lambda ,k}\left( \left( x,\lambda x\right)
	,\left( x,\lambda x\right) \right) &=&\lambda \left\langle x,\lambda
	x\right\rangle -k\left\langle x,x\right\rangle =\left( \lambda ^{2}-k\right)
	\left\Vert x\right\Vert ^{2}\text{.}
\end{eqnarray*}%
Hence, $h_{\lambda ,k}\left( \mathcal{E}^{\lambda },\left\{ 0\right\} \times 
\mathfrak{k}\right) =0$ and $h_{\lambda ,k}\left( \left( x,\lambda x\right)
,\left( x,\lambda x\right) \right) =\left\Vert x\right\Vert ^{2}$ for all $%
x\in \mathfrak{k}$. Then, by Proposition \ref{PropRM} with $\frak{d}=\mathcal{E}^{\lambda }$, all the normal
sub-Riemannian geodesics through the identity have the stated form. Those
are all the geodesics through the identity (see Remark \ref{NabN}). The last
assertion follows from the fact that $e^{t\left( A+B\right) }=e^{tA}e^{tB}$
for all $t$ if and only if $\left[ A,B\right] =0$ (see for instance p.\ 23
in \cite{EngelBook}). \hfill $\square $

\section{Octonionic screw motions\label{SOcto}}

\subsection{The octonionic cross product and its automorphism group}

We recall the octonionic cross product on $\mathbb{R}^{7}=\operatorname{Im}\left( 
\mathbb{O}\right) $. With respect to the ordered basis $\left\{ e_{1},\dots
,e_{7}\right\} $, it is given by%
\begin{equation}
	e_{i}\times e_{i+1}=e_{i+3}\text{ \ mod 7,}  \label{multiplicaO}
\end{equation}%
$e_{i}\times e_{j}=-e_{j}\times e_{i}$ for all $i,j$ (in particular, $%
e_{i}\times e_{i}=0$) and $e_{i}\times e_{j}=e_{k}$ implies $e_{j}\times
e_{k}=e_{i}$. See for instance \cite{Eschen}. For $u\in \mathbb{R}^{7} $ we
define $L_{u}:\mathbb{R}^{7} \rightarrow \mathbb{R}^{7}$ by $L_{u}\left(
v\right) =u\times v$. We have that $L_{u}\in \mathfrak{o}\left( 7\right) $.

Before proving Theorem \ref{Octo} we recall some facts concerning the
octonionic cross product and $\mathfrak{g}_{2}$, the Lie algebra of $G_{2}$.
By (2.16) in \cite{SK},%
\begin{equation}
	\mathfrak{g}_{2}=\left\{ Z\in \mathfrak{o}\left( 7\right) \mid Z\left(
	u\times v\right) =Z\left( u\right) \times v+u\times Z\left( v\right) \text{
		for all }u,v\in \mathbb{R}^{7}\right\} \text{.}  \label{Lieg2}
\end{equation}%
This implies that%
\begin{equation}
	\left[ Z,L_{u}\right] =L_{Z\left( u\right) }  \label{corZL}
\end{equation}%
for all $Z\in \mathfrak{g}_{2}$ and $u\in \mathbb{R}^{7}$. Moreover, by
Theorem 8.5 of \cite{S}, $\mathfrak{o}\left( 7\right) $ decomposes as 
\begin{equation}
	\mathfrak{o}\left( 7\right) =\mathcal{L}\oplus \mathfrak{g}_{2}\text{,}
	\label{decom}
\end{equation}%
where $\mathcal{L}=\left\{ L_{y}\mid y\in \mathbb{R}^{7}\right\} $. A
straightforward computation using (4.2) in \cite{S} gives%
\begin{equation*}
	\left[ L_{u},L_{v}\right] =3u\wedge v-2L_{u\times v}\text{.}
\end{equation*}%
Now we rewrite this expression as%
\begin{equation}
	\left[ L_{u},L_{v}\right] +L_{u\times v}=3u\wedge v-L_{u\times v}=Z\left(
	u,v\right) \text{,}  \label{decompo}
\end{equation}%
with $Z\left( u,v\right) $ as in (\ref{Zxy}). By (5.4) in \cite{CDF}, the
left hand side is in $\mathfrak{g}_{2},$ and so $Z\left( u,v\right) \in 
\mathfrak{g}_{2}$, as stated in the introduction. We conclude then from (\ref%
{decom}) that $-L_{u\times v}=L_{-u\times v}$ and $Z\left( u,v\right) $ are
the components of $\left[ L_{u},L_{v}\right] $ in $\mathcal{L}$ and $%
\mathfrak{g}_{2}$, respectively.

\subsection{Controllability of the octonionic system}

We prove the controllability of the system $\left( \mathbb{R}^{7}\rtimes
SO_{7},\mathcal{D}^{\lambda }\right) $.

\medskip

\noindent \textit{Proof of Theorem \ref{Octo}. }By the Chow-Rashevsky
Theorem it suffices to prove that $\mathcal{D}^{\lambda }$ is
bracket-generating. Similarly as in (\ref{corcheteD}), we have 
\begin{equation*}
	\left[\left( x,\lambda L_{x}\right) ,\left( y,\lambda L_{y}\right) \right]
	=\left( 2\lambda x\times y,\lambda ^{2}\left[ L_{x},L_{y}\right] \right)%
	\text{.}
\end{equation*}

Now we check that%
\begin{equation}
	\left\{ \left( e_{s},\lambda L_{e_{s}}\right) \mid 1\leq s\leq 7\right\}
	\cup \left\{ \left( 2e_{i}\times e_{j},\lambda \left[ L_{e_{i}},L_{e_{j}}%
	\right] \right) \mid 1\leq i<j\leq 7\right\}  \label{union}
\end{equation}%
is a basis of $\mathbb{R}^{7}\rtimes \mathfrak{o}\left( 7\right) $. We set
up the equation 
\begin{equation}
	\sum_{s=1}^{7}a_{s}\left( e_{s},\lambda L_{e_{s}}\right) +\sum_{1\leq
		i<j\leq 7}a_{ij}\left( 2e_{i}\times e_{j},\lambda \left[ L_{e_{i}},L_{e_{j}}%
	\right] \right) =0\text{.}  \label{li1}
\end{equation}

For each $s=1,\dots ,7$ let $\iota \left( s\right) =\left\{ \left(
i,j\right) \in \left\{ 1,\dots ,7\right\} ^{2}\mid e_{i}\times
e_{j}=e_{s}\right\} $, whose elements can be read off from the following
table, built up easily using (\ref{multiplicaO}):%
\begin{equation}
	\begin{tabular}{ll}
		$e_{1}=e_{5}\times e_{6}=e_{2}\times e_{4}=e_{3}\times e_{7}$ & $%
		e_{2}=e_{6}\times e_{7}=e_{4}\times e_{1}=e_{3}\times e_{5}$ \\ 
		$e_{3}=e_{7}\times e_{1}=e_{5}\times e_{2}=e_{4}\times e_{6}$ & $%
		e_{4}=e_{1}\times e_{2}=e_{6}\times e_{3}=e_{5}\times e_{7}$ \\ 
		$e_{5}=e_{2}\times e_{3}=e_{7}\times e_{4}=e_{6}\times e_{1}$ & $%
		e_{6}=e_{3}\times e_{4}=e_{7}\times e_{2}=e_{1}\times e_{5}$ \\ 
		$e_{7}=e_{4}\times e_{5}=e_{2}\times e_{6}=e_{1}\times e_{3}$ & 
	\end{tabular}
	\label{table124}
\end{equation}

For $\left( i,j\right) \in \iota \left( s\right) $ we have from (\ref%
{decompo}) that%
\begin{equation*}
	\left[ L_{e_{i}},L_{e_{j}}\right] =-L_{e_{s}}+Z\left( e_{i},e_{j}\right) 
	\text{,}
\end{equation*}%
with $Z\left( e_{i},e_{j}\right) =3e_{i}\wedge e_{j}-L_{e_{s}}\in \mathfrak{g%
}_{2}$, as in (\ref{Zxy}). We can rewrite (\ref{li1}) as%
\begin{equation}
	\sum_{s=1}^{7}a_{s}\left( e_{s},\lambda L_{e_{s}}\right)
	+\sum_{s=1}^{7}\sum_{\left( i,j\right) \in \iota \left( s\right)
	}a_{s}\left( i,j\right) \left( 2e_{s},\lambda \left(
	2L_{e_{s}}-3L_{e_{s}}+Z\left( e_{i},e_{j}\right) \right) \right) =0\text{.}
	\label{SumSum}
\end{equation}

Now, using (\ref{decom}), we see that 
\begin{equation}
	\mathbb{R}^{7}\rtimes \mathfrak{0}\left( 7\right) =\mathcal{D}^{\lambda
	}\oplus \mathcal{V}\oplus \left( \left\{ 0\right\} \times \mathfrak{g}%
	_{2}\right) \text{,}  \label{decomG}
\end{equation}%
where $\mathcal{V}=\left\{ \left( 0,L_{y}\right) \mid y\in \mathbb{R}%
^{7}\right\} $. Hence, all three components of the left hand side of (\ref%
{SumSum}) with respect to the decomposition vanish separately. Thus, calling 
$\sigma \left( s\right) =\sum_{\left( i,j\right) \in \iota \left( s\right)
}a_{s}\left( i,j\right) $, we have%
\begin{equation*}
	\sum_{s=1}^{7}\left( a_{s}+2\sigma \left( s\right) \right) \left(
	e_{s},\lambda L_{e_{s}}\right) =0\text{,\ \ \ \ \ \ }\sum_{s=1}^{7}\sigma
	\left( s\right) \left( 0,L_{e_{s}}\right) =0
\end{equation*}%
and hence $\sigma \left( s\right) =0=a_{s}$ for all $s$. Next we consider
the component in $\left\{ 0\right\} \times \mathfrak{g}_{2}$. By the
definitions of $Z\left( e_{i},e_{j}\right) $ and $\sigma \left( s\right) $
we have 
\begin{equation*}
	0=\sum_{s=1}^{7}\sum_{\left( i,j\right) \in \iota \left( s\right)
	}a_{s}\left( i,j\right) Z\left( e_{i},e_{j}\right)
	=3\sum_{s=1}^{7}\sum_{\left( i,j\right) \in \iota \left( s\right)
	}a_{s}\left( i,j\right) e_{i}\wedge e_{j}-\sum_{s=1}^{7}\sigma \left(
	s\right) L_{e_{s}}\text{.}
\end{equation*}

Now, the last term equals zero. Also, by (\ref{table124}), $\left\{
e_{i}\wedge e_{j}\mid \left( i,j\right) \in \iota \left( s\right) \text{, }%
s=1,\dots ,7\right\} $ is linearly independent. Consequently, $a_{s}\left(
i,j\right) =0$ for all $\left( i,j\right) \in \iota \left( s\right) $ and
all $s$ and so (\ref{union}) is a basis of $\mathbb{R}^{7}\rtimes \mathfrak{o%
}\left( 7\right) $, as desired. \hfill $\square $

\subsection{Sub-Riemannian geodesics as orbits. A second criterion}

In the proof of Theorem \ref{OctoGeode} we will use conditions for a
sub-Riemannian geodesic to be the orbit of a monoparametric subgroup of a
Lie group acting on the manifold.

We state the criterion in Proposition \ref{Toth} below. In order to do that,
it is convenient to present the sub-Riemannian structure $g$ on $\left( N,\mathcal{D}%
,g\right) $ in an equivalent manner as a cometric, that is, a morphism of
vector bundles $b:TN^{\ast }\rightarrow TN$ with $b^{\ast }=b$ and $\alpha
\left( b\left( \alpha \right) \right) >0$ for all $\alpha \in T^{\ast }N$.
The relationship is as follows: The distribution $\mathcal{D}$ is the image
of $b$ and 
\begin{equation}
	g\left( u,v\right) =\beta \left( u\right) =\alpha \left( v\right) \text{\ \
		\ \ \ if }v=b\left( \beta \right) \text{ and\ }u=b\left( \alpha \right) .
	\label{subR*}
\end{equation}%
The Hamiltonian $H$ of $\left( N,b\right) $ is defined by 
\begin{equation}
	H:T^{\ast }N\rightarrow \mathbb{R}\text{, \ \ \ \ \ \ \ \ \ }H\left( \alpha
	\right) =\tfrac{1}{2}\alpha \left( b\left( \alpha \right) \right) .
	\label{Hamil}
\end{equation}

We recall Lemma 3.3 in \cite{Toth}, that will allow us to obtain geodesics
of our system. See Lemma 3.4 in \cite{AP} for an alternative proof.

\begin{proposition}
	\label{Toth}\cite{Toth, AP} Let $\bar{G}$ be a Lie group, with Lie algebra $%
	\mathfrak{\bar{g}}$, acting transitively on a smooth manifold $N$ and let $%
	o\in N$. Let $f:\mathfrak{\bar{g}}\rightarrow T_{o}N$ be the linear map
	defined by 
	\begin{equation*}
		f\left( Y\right) =\left. \tfrac{d}{dt}\right\vert _{0}\exp \left( tY\right)
		\cdot o\text{.}
	\end{equation*}
	
	Let $H:T^{\ast }N\rightarrow \mathbb{R}$ be a $\bar{G}$-invariant
	Hamiltonian function and let $H_{o}=\left. H\right\vert _{T_{o}^{\ast
		}N}:T_{o}^{\ast }N\rightarrow \mathbb{R}$.
	
	Let $Y\in \mathfrak{\bar{g}}$ and let $\alpha \in T_{o}^{\ast }N$. Then the
	curve $t\mapsto \exp \left( tY\right) \cdot o$ is the normal geodesic
	associated with the momentum $\alpha $ if and only if%
	\begin{equation}
		\left( dH_{o}\right) _{\alpha }=f\left( Y\right) \text{\ \ \ \ \ \ \ and\ \
			\ \ \ \ \ }f^{\ast }\left( \alpha \right) \circ \text{\emph{ad}}_{Y}=0\text{.%
		}  \label{TothDisplay}
	\end{equation}
\end{proposition}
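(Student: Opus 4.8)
The plan is to lift the whole problem to the cotangent bundle and to read off the two conditions in (\ref{TothDisplay}) as, respectively, the vertical and the horizontal component of a single equality of Hamiltonian vector fields at $\alpha$. Throughout, let $\pi:T^{\ast }N\rightarrow N$ be the projection, $\omega$ the canonical symplectic form, and $\vec{F}$ the Hamiltonian vector field of a function $F$ on $T^{\ast }N$. Recall that the normal geodesics of $\left( N,\mathcal{D},g\right)$ are the $\pi$-projections of the integral curves of $\vec{H}$, with $H$ as in (\ref{Hamil}); the normal geodesic associated with the momentum $\alpha \in T_{o}^{\ast }N$ is the projection of $t\mapsto \Phi _{t}^{\vec{H}}\left( \alpha \right)$. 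The action of $\bar{G}$ on $N$ lifts to a Hamiltonian action on $\left( T^{\ast }N,\omega \right)$ with equivariant moment map $\mu :T^{\ast }N\rightarrow \bar{\mathfrak{g}}^{\ast }$ given by $\langle \mu \left( \xi \right) ,W\rangle =\xi \big( W_{N}\left( \pi \xi \right) \big)$, where $W_{N}$ is the fundamental vector field of $W\in \bar{\mathfrak{g}}$. I will use two facts: (i) writing $\mu _{W}=\langle \mu \left( \cdot \right) ,W\rangle$, the fundamental vector field $W_{T^{\ast }N}$ of the lifted action equals $\vec{\mu _{W}}$, and $\{\mu _{W},\mu _{W^{\prime }}\}=\mu _{\left[ W,W^{\prime }\right] }$; (ii) at the base point $W_{N}\left( o\right) =f\left( W\right)$ by the definition of $f$, so $\mu $ restricted to the fiber is exactly $f^{\ast }$, that is $\mu \left( \alpha \right) =f^{\ast }\left( \alpha \right) $ for $\alpha \in T_{o}^{\ast }N$. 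Finally, the lifted orbit $t\mapsto \exp \left( tY\right) \cdot \alpha$ is the integral curve of $Y_{T^{\ast }N}=\vec{\mu _{Y}}$ through $\alpha$, and it projects to $t\mapsto \exp \left( tY\right) \cdot o$.

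The core reduction is the equivalence
\[
\exp \left( tY\right) \cdot o \text{ is the normal geodesic associated with } \alpha \iff \vec{H}\left( \alpha \right) =\vec{\mu _{Y}}\left( \alpha \right) .
\]
To prove it, I would note that $H$ is $\bar{G}$-invariant by hypothesis and that $\mu _{Y}$ is invariant under the flow $g_{t}=\exp \left( tY\right)$, because $\text{Ad}_{g_{t}^{-1}}Y=Y$; hence both $\vec{H}$ and $\vec{\mu _{Y}}$ are $g_{t}$-invariant vector fields. If they agree at $\alpha$, then $g_{t}$-invariance propagates the equality along the orbit $g_{t}\cdot \alpha$, so this orbit is simultaneously an integral curve of $\vec{\mu _{Y}}$ and of $\vec{H}$; by uniqueness of integral curves it coincides with $\Phi _{t}^{\vec{H}}\left( \alpha \right)$, whose projection is the geodesic. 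The converse is immediate upon differentiating at $t=0$.

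It then remains to unpack $\vec{H}\left( \alpha \right) =\vec{\mu _{Y}}\left( \alpha \right)$, equivalently $\left( dH\right) _{\alpha }=\left( d\mu _{Y}\right) _{\alpha }$ in $T_{\alpha }^{\ast }\left( T^{\ast }N\right)$, by testing against a spanning set of $T_{\alpha }\left( T^{\ast }N\right)$. Since the action is transitive, $\pi _{\ast }W_{T^{\ast }N}\left( \alpha \right) =f\left( W\right)$ ranges over all of $T_{o}N$, so the vertical subspace $T_{o}^{\ast }N$ together with the fundamental vectors $\{W_{T^{\ast }N}\left( \alpha \right) :W\in \bar{\mathfrak{g}}\}$ spans $T_{\alpha }\left( T^{\ast }N\right)$. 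On a vertical vector $v\in T_{o}^{\ast }N$ one computes $\left( dH\right) _{\alpha }\left( v\right) =\left( dH_{o}\right) _{\alpha }\left( v\right)$ and $\left( d\mu _{Y}\right) _{\alpha }\left( v\right) =v\left( f\left( Y\right) \right)$, so equality for all $v$ is exactly $\left( dH_{o}\right) _{\alpha }=f\left( Y\right)$ under $\left( T_{o}^{\ast }N\right) ^{\ast }\cong T_{o}N$, the first condition. On a fundamental vector $W_{T^{\ast }N}\left( \alpha \right) =\vec{\mu _{W}}\left( \alpha \right)$ one has $\left( dH\right) _{\alpha }\left( \vec{\mu _{W}}\left( \alpha \right) \right) =0$ by invariance of $H$, while using (i) and (ii), $\left( d\mu _{Y}\right) _{\alpha }\left( \vec{\mu _{W}}\left( \alpha \right) \right) =\{\mu _{W},\mu _{Y}\}\left( \alpha \right) =\langle f^{\ast }\left( \alpha \right) ,\left[ W,Y\right] \rangle$ up to sign; equality for all $W$ is exactly $f^{\ast }\left( \alpha \right) \circ \text{ad}_{Y}=0$, the second condition.

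I expect the main obstacle to be the careful bookkeeping of the cotangent-lifted action in consistent conventions: verifying $W_{T^{\ast }N}=\vec{\mu _{W}}$ with the correct sign, the equivariance of $\mu$, the Poisson identity $\{\mu _{W},\mu _{Y}\}=\mu _{\left[ W,Y\right] }$, and the identification $\left( dH_{o}\right) _{\alpha }\in \left( T_{o}^{\ast }N\right) ^{\ast }\cong T_{o}N$. The conceptual content—that the first condition is the vertical part and the second the horizontal part of one vector-field identity at $\alpha$—is clean, and the computations are short; the risk lies entirely in the sign conventions and in fixing the meaning of ``associated with the momentum $\alpha$'' as an identity of extremals $\Phi _{t}^{\vec{H}}\left( \alpha \right) =\exp \left( tY\right) \cdot \alpha$ rather than merely of base curves, which is what makes the biconditional hold in the sub-Riemannian (non-invertible cometric) setting.
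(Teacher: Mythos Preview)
The paper does not prove this proposition; it is simply quoted from the literature (Lemma~3.3 of \cite{Toth}, with the alternative proof referenced as Lemma~3.4 of \cite{AP}), so there is no ``paper's proof'' to compare against. Your moment-map argument on the cotangent lift is correct and is essentially the argument in \cite{AP}: the lifted $\bar G$-action is Hamiltonian with moment map $\mu\vert_{T_o^*N}=f^{\ast}$, and the single identity $\vec H(\alpha)=\vec{\mu_Y}(\alpha)$ splits against the vertical directions and the fundamental vectors into exactly the two conditions of~(\ref{TothDisplay}).

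You have also put your finger on the only genuine issue. The forward implication (conditions $\Rightarrow$ geodesic) is clean, via $g_t$-invariance of both vector fields. For the converse, your phrase ``immediate upon differentiating at $t=0$'' presupposes the equality $\Phi_t^{\vec H}(\alpha)=\exp(tY)\cdot\alpha$ of curves in $T^{\ast}N$, not merely of their projections to $N$; from equality of base curves alone, differentiating at $t=0$ yields only $\pi_{\ast}\vec H(\alpha)=f(Y)$, i.e.\ the first condition. This is exactly why the phrase ``normal geodesic associated with the momentum $\alpha$'' must be read, as you note in your last paragraph, as the normal \emph{extremal} through $\alpha$ (equivalently, one records the covector along the curve, not just the base point). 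With that reading the biconditional holds and your argument is complete; you should state this interpretation up front rather than leave it as a caveat at the end.
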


Notice that $\left( dH_{o}\right) _{\alpha }:T_{\alpha }\left( T_{o}^{\ast
}N\right) \equiv T_{o}^{\ast }N\rightarrow \mathbb{R}$ is a linear map an
so, $\left( dH_{o}\right) _{\alpha }\in \left( T_{o}^{\ast }N\right) ^{\ast
}=T_{o}N$. Also, given $\alpha \in T_{o}^{\ast }N$, the set $\left\{ Y\in 
\mathfrak{\bar{g}}\mid Y\text{ satisfies (\ref{TothDisplay})}\right\} $ is
an affine subspace and any two vectors there give rise to the same
sub-Riemannian geodesic.

\medskip

Next we state the particular case relevant in our situation.

\begin{corollary}
	\label{CoroToth}Let $K$ be a closed subgroup of a Lie group $G$, with Lie
	algebras $\mathfrak{k}$ and $\mathfrak{g}$, respectively. Let $b:T^{\ast
	}G\rightarrow TG$ be a sub-Riemannian structure on $G$ invariant by the
	action of $\bar{G}=G\times K$ on $G$ given by $\left( g,k\right) \cdot
	h=ghk^{-1}$.
	
	\smallskip
	
	Let $\alpha \in T_{e}^{\ast }G$ and $X\in \mathfrak{g}$, $Z\in \mathfrak{k}$%
	. Then the three identities 
	\begin{equation}
		b\left( \alpha \right) =X-Z\text{,\ \ \ \ \ \ \ }\alpha \circ \text{\emph{ad}%
		}_{X}=0\in \mathfrak{g}^{\ast }\text{\ \ \ \ \ \ and \ \ \ \ \ }\alpha \circ 
		\text{\emph{ad}}_{Z}=0\in \mathfrak{k}^{\ast }  \label{CoroTothDisplay}
	\end{equation}%
	are satisfied if and only if $\gamma \left( t\right) =\exp \left( tX\right)
	\exp \left( -tZ\right) $ is the unique normal sub-Riemannian geodesic in $G$
	with initial momentum $\alpha $. Its initial velocity equals $X-Z$.
\end{corollary}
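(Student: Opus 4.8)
The plan is to deduce the corollary directly from Proposition \ref{Toth}, specializing to $N = G$, $o = e$, and $\bar{G} = G\times K$ acting by $(g,k)\cdot h = ghk^{-1}$. First I would compute the linear map $f:\mathfrak{g}\oplus\mathfrak{k}\to T_eG = \mathfrak{g}$ of Proposition \ref{Toth}: for $(X,Z)\in\mathfrak{g}\oplus\mathfrak{k}$ one has $f(X,Z) = \tfrac{d}{dt}\big|_0 \exp(tX)\exp(-tZ) = X - Z$, where $Z$ is viewed inside $\mathfrak{g}$ via the inclusion $\mathfrak{k}\hookrightarrow\mathfrak{g}$. Next I would identify the Hamiltonian $H:T^\ast G\to\mathbb{R}$ of the cometric $b$ with its restriction $H_e = H|_{T_e^\ast G}$: since $b$ is left invariant and the Hamiltonian is quadratic, $(dH_e)_\alpha \in (T_e^\ast G)^\ast = T_eG = \mathfrak{g}$ is exactly $b(\alpha)$, by the self-adjointness $b^\ast = b$ and the definition $H(\alpha) = \tfrac12\alpha(b(\alpha))$ in (\ref{Hamil}) — differentiating the quadratic form gives the associated symmetric bilinear map applied once, i.e. $b(\alpha)$.

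With these two identifications, the first equation of (\ref{TothDisplay}), namely $(dH_e)_\alpha = f(Y)$, becomes $b(\alpha) = X - Z$, which is the first identity of (\ref{CoroTothDisplay}). For the second equation of (\ref{TothDisplay}), $f^\ast(\alpha)\circ\mathrm{ad}_Y = 0$, I would compute the dual map $f^\ast: T_e^\ast G = \mathfrak{g}^\ast \to (\mathfrak{g}\oplus\mathfrak{k})^\ast$. Since $f(X,Z) = X - Z$ with $Z$ embedded in $\mathfrak{g}$, we get $f^\ast(\alpha)(X,Z) = \alpha(X) - \alpha(Z) = \alpha(X) - (\alpha|_{\mathfrak{k}})(Z)$, so $f^\ast(\alpha) = (\alpha, -\alpha|_{\mathfrak{k}}) \in \mathfrak{g}^\ast\oplus\mathfrak{k}^\ast$. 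The adjoint action of $Y = (X,Z)\in\mathfrak{g}\oplus\mathfrak{k}$ on $\mathfrak{g}\oplus\mathfrak{k}$ is $\mathrm{ad}_{(X,Z)}(X',Z') = ([X,X'] + \text{(cross terms)}, [Z,Z'])$; here the relevant subtlety is how $\mathfrak{k}$ sits in $\mathfrak{g}$ and how the $\bar{G}$-action differentiates — the Lie algebra of $\bar G = G\times K$ acts with the first factor by the usual adjoint of $G$ and the second by $-\mathrm{ad}_Z$ coming from the right action $h\mapsto hk^{-1}$. Carrying this through, the condition $f^\ast(\alpha)\circ\mathrm{ad}_Y = 0$ splits into the two stated conditions $\alpha\circ\mathrm{ad}_X = 0$ in $\mathfrak{g}^\ast$ and $\alpha\circ\mathrm{ad}_Z = 0$ in $\mathfrak{k}^\ast$.

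The main obstacle will be bookkeeping the two inclusions and the sign conventions precisely: keeping straight that $Z\in\mathfrak{k}$ is also regarded as an element of $\mathfrak{g}$, that the right-multiplication part of the $\bar G$-action contributes a minus sign, and that $f^\ast(\alpha)$ lives in $\mathfrak{g}^\ast\oplus\mathfrak{k}^\ast$ while $\mathrm{ad}_Y$ acts on $\mathfrak{g}\oplus\mathfrak{k}$ — so that the pairing $f^\ast(\alpha)\circ\mathrm{ad}_{(X,Z)}$ genuinely decouples into the $\mathfrak{g}$-component giving $\alpha\circ\mathrm{ad}_X$ and the $\mathfrak{k}$-component giving $\alpha\circ\mathrm{ad}_Z$. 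Once this is set up carefully the equivalence is immediate, and the final sentences (uniqueness of the normal geodesic with momentum $\alpha$, and initial velocity $X - Z = b(\alpha)$) follow from the corresponding statements in Proposition \ref{Toth} together with $f(Y) = X - Z$.
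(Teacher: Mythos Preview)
Your approach is correct and essentially identical to the paper's: specialize Proposition~\ref{Toth} with $\bar G=G\times K$, $N=G$, $o=e$, compute $f(X,Z)=X-Z$ and $(dH_e)_\alpha=b(\alpha)$, and split $f^\ast(\alpha)\circ\mathrm{ad}_Y=0$ into the two conditions. One clarification: there are no ``cross terms'' in $\mathrm{ad}_{(X,Z)}$, since $\bar G=G\times K$ is a \emph{direct product} of Lie groups and hence $[(X,Z),(X',Z')]=([X,X'],[Z,Z'])$ in $\bar{\mathfrak g}=\mathfrak g\times\mathfrak k$; the minus sign from the right action is already absorbed into $f$, so $f^\ast(\alpha)[(X,Z),(X',Z')]=\alpha([X,X'])-\alpha([Z,Z'])$ and the decoupling is immediate with no further subtlety.
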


\begin{proof}
	In Proposition \ref{Toth} consider $\bar{G}=G\times K$, $N=G$ and $Y=\left(
	X,Z\right) $. The $\bar{G}$-invariance of the Hamiltonian follows from $\bar{%
		G}$-invariance of $b$. By (\ref{Hamil}) we have 
	\begin{eqnarray*}
		\left( dH_{o}\right) _{\alpha }\left( \beta \right) &=&\left. \tfrac{d}{dt}%
		\right\vert _{0}H_{o}\left( \alpha +t\beta \right) =\left. \tfrac{d}{dt}%
		\right\vert _{0}\tfrac{1}{2}\left( \alpha +t\beta \right) \left( b\left(
		\alpha +t\beta \right) \right) \\
		&=&\left. \tfrac{d}{dt}\right\vert _{0}\tfrac{t}{2}\left( \beta \left(
		b\left( \alpha \right) \right) +\alpha \left( b\left( \beta \right) \right)
		\right) =\beta \left( b\left( \alpha \right) \right)
	\end{eqnarray*}%
	(the last equality holds since $b^{\ast }=b$). So, $\left( dH_{o}\right)
	_{\alpha }=b\left( \alpha \right) $. Now we compute $f:\mathfrak{\bar{g}%
		=g\times k}\rightarrow T_{e}G=\mathfrak{g}$: 
	\begin{equation}
		f\left( X,Z\right) =\left. \tfrac{d}{dt}\right\vert _{0}\exp \left( t\left(
		X,Z\right) \right) e=\left. \tfrac{d}{dt}\right\vert _{0}\exp \left(
		tX\right) \exp \left( -tZ\right) =X-Z\text{.}  \label{f(x,z)}
	\end{equation}
		Thus, the first equation in (\ref{TothDisplay}) translates into the first
	equation in (\ref{CoroTothDisplay}).
	
	By (\ref{f(x,z)}), $\left( f^{\ast }\left( \alpha \right) \right) \left(
	X,Z\right) =\alpha \left( f\left( X,Z\right) \right) =\alpha \left(
	X-Z\right) $ for $\alpha \in \mathfrak{g}^{\ast }$ and so, 
	\begin{equation*}
		\left( f^{\ast }\left( \alpha \right) \right) \left[ \left( X,Z\right)
		,\left( X^{\prime },X^{\prime }\right) \right] =\alpha \left( \left[
		X,X^{\prime }\right] ,\left[ Z,Z^{\prime }\right] \right) =\alpha \left( %
		\left[ X,X^{\prime }\right] \right) -\alpha \left( \left[ Z,Z^{\prime }%
		\right] \right)
	\end{equation*}%
	for all $X,X^{\prime }\in \mathfrak{g}$, $Z,Z^{\prime }\in \mathfrak{k}$.
	Thus, the second equation in (\ref{TothDisplay}) translates into the second
	and third equations in (\ref{CoroTothDisplay}). The initial velocity of $%
	\gamma $ is $\gamma ^{\prime }\left( 0\right) =\left. \frac{d}{dt}%
	\right\vert _{0}\exp \left( tx\right) +\left. \frac{d}{dt}\right\vert
	_{0}\exp \left( -tz\right) =x-z$.
\end{proof}


\subsection{Sub-Riemannian geodesics of the octonionic system}

We present the proof of the result which gives explicitly geodesics of $%
\left( \mathbb{R}^{7}\rtimes SO_{7},\mathcal{D}^{\lambda }\right) $.

\medskip

\noindent \textit{Proof of Theorem \ref{OctoGeode}. }We may suppose that $%
\left\Vert x\right\Vert =1$, since if $\gamma $ is the geodesic with initial
momentum $\theta \in \mathfrak{g}^{\ast }$, then the curve $\sigma $ given
by $\sigma \left( t\right) =\gamma \left( ct\right) $ is the geodesic with
initial momentum $c\theta $ (notice that $Z$ is bilinear in $x$ and $y$).

We suppose first that $\left\Vert y\right\Vert =:n\neq 0$ and call $x_{1}=x$%
, $x_{2}=y/n$ and $x_{3}=x_{1}\times x_{2}$. Let $\left\{ x_{i}\mid
i=1,\dots ,7\right\} $ be a basis of $\mathbb{R}^{7}$ and let $\mathcal{G}%
=\left\{ Z_{1},\dots ,Z_{14}\right\} $ be any basis of $\left\{ 0\right\}
\times \mathfrak{g}_{2}$. Let $\mathcal{B}$ be the juxtaposition of the sets 
$\left\{ \left( x_{i},\lambda L_{x_{i}}\right) \mid i=1,\dots ,7\right\} $, $%
\left\{ \left( 0,L_{x_{i}}\right) \mid i=1,\dots ,7\right\} $ and $\mathcal{G%
}$, which together form a basis of $\mathfrak{g}:=\mathbb{R}^{7}\rtimes 
\mathfrak{o}\left( 7\right) $ by (\ref{decomG}), and let $\left\{ \delta
_{1},\dots ,\delta _{7},\nu _{1},\dots ,\nu _{7},\zeta _{1},\dots ,\zeta
_{14}\right\} $ be the basis of $\mathfrak{g}^{\ast }$ dual to $\mathcal{B}$.

Define $b:\mathfrak{g}^{\ast }\rightarrow \mathfrak{g}$ by $b\left( \delta
_{i}\right) =\left( x_{i},\lambda L_{x_{i}}\right) $, $b\left( \nu
_{i}\right) =0=b\left( \zeta _{j}\right) $ for $i=1,\dots ,7$ and $j=1,\dots
,14$. The map $b$ is the cometric corresponding to the sub-Riemannian metric
we are considering. Indeed, it is not difficult to see that $b^{\ast }=b$
and the image of $b$ is $\mathcal{D}^{\lambda }$; also,%
\begin{equation*}
	\left\langle x_{i},x_{j}\right\rangle =\left\langle \left( x_{i},\lambda
	L_{x_{i}}\right) ,\left( x_{j},\lambda L_{x_{j}}\right) \right\rangle
	=\delta _{j}\left( x_{i},\lambda L_{x_{i}}\right) =\delta _{ij}\text{,}
\end{equation*}%
as required (see (\ref{subR*})).

We apply Corollary \ref{CoroToth} with 
\begin{equation*}
	\mathfrak{\bar{g}=g}\times \mathfrak{g}_{2}\text{,\ \ \ \ \ \ }X=\left(
	x,\lambda L_{x}+Z\left( x,y\right) \right) \text{\ \ \ \ \ \ \ and\ \ \ \ \
		\ \ \ \ }Z=Z\left( x,y\right) 
\end{equation*}%
to prove that $\gamma _{x,y}$ is the sub-Riemannian geodesic with initial
momentum $\alpha =\delta _{1}+c\nu _{1}+d\nu _{3}$ for some numbers $c,d$ to
be determined later. We compute%
\begin{equation*}
	b\left( \alpha \right) =b\left( \delta _{1}+c\nu _{1}+d\nu _{3}\right)
	=b\left( \delta _{1}\right) =\left( x,\lambda L_{x}\right) =\left( x,\lambda
	L_{x}+Z\left( x,y\right) \right) -\left( 0,Z\left( x,y\right) \right) \text{.%
	}
\end{equation*}%
So, the first identity in (\ref{CoroTothDisplay}) holds.

Since $\mathfrak{g}_{2}$ is a subalgebra of $\mathfrak{o}\left( 7\right) $, $%
\alpha \left( 0,\left[ Z\left( x,y\right) ,W\right] \right) =0$ for all $%
W\in \mathfrak{g}_{2}$. This yields the third identity in (\ref%
{CoroTothDisplay}). Now we verify the second one in our case.

By (\ref{decomG}), an arbitrary element $X^{\prime }$ of $\mathfrak{g}=%
\mathbb{R}^{7}\rtimes \mathfrak{o}\left( 7\right) $ can be written as 
\begin{equation*}
	X^{\prime }=\left( u,\lambda L_{u}\right) +\left( 0,L_{v}\right) +\left(
	0,W\right)
\end{equation*}%
with $u,v\in \mathbb{R}^{7}$ and $W\in \mathfrak{g}_{2}$. Next we compute $%
\alpha \left( \left[ X,X^{\prime }\right] \right) $.

Using the expression $\left[ \left( a,A\right) ,\left( b,B\right) \right]
=\left( Ab-Ba,\left[ A,B\right] \right) $ for the Lie bracket on $\mathbb{R}%
^{7}\rtimes \mathfrak{o}\left( 7\right) $, (\ref{decompo}) and (\ref{corZL}%
), we obtain that $\left[ X,X^{\prime }\right] $ is the sum of the following
six terms: 
\begin{eqnarray*}
	\text{1) }\left[ \left( x,\lambda L_{x}\right) ,\left( u,\lambda
	L_{u}\right) \right] &=&\left( 2\lambda x\times u,\lambda ^{2}\left[
	L_{x},L_{u}\right] \right) \\
	&=&\left( 2\lambda x\times u,\lambda ^{2}Z\left( x,u\right) -\lambda
	^{2}L_{x\times u}\right) \\
	&=&2\lambda \left( x\times u,\lambda L_{x\times u}\right) -3\lambda
	^{2}\left( 0,L_{x\times u}\right) +\lambda ^{2}\left( 0,Z\left( x,u\right)
	\right) \text{.}
\end{eqnarray*}%
\begin{eqnarray*}
	\text{2) }\left[ \left( x,\lambda L_{x}\right) ,\left( 0,L_{v}\right) \right]
	&=&\left( -L_{v}\left( x\right) ,\lambda \left[ L_{x},L_{v}\right] \right)
	=\left( x\times v,\lambda \left( Z\left( x,v\right) -L_{x\times v}\right)
	\right) \\
	&=&\left( x\times v,\lambda L_{x\times v}\right) -2\lambda \left(
	0,L_{x\times v}\right) +\lambda \left( 0,Z\left( x,v\right) \right) .
\end{eqnarray*}%
\begin{equation*}
	\begin{tabular}{l}
		3) $\left[ \left( x,\lambda L_{x}\right) ,\left( 0,W\right) \right] =-\left(
		W\left( x\right) ,\lambda L_{W\left( x\right) }\right) $. \\ 
		4) $\left[ \left( 0,Z\left( x,y\right) \right) ,\left( u,\lambda
		L_{u}\right) \right] =\left( Z\left( x,y\right) \left( u\right) ,\lambda
		L_{Z\left( x,y\right) \left( u\right) }\right) $. \\ 
		5) $\left[ \left( 0,Z\left( x,y\right) \right) ,\left( 0,L_{v}\right) \right]
		=\left( 0,L_{Z\left( x,y\right) \left( v\right) }\right) $. \\ 
		6) $\left[ \left( 0,Z\left( x,y\right) \right) ,\left( 0,W\right) \right]
		=\left( 0,\left[ Z\left( x,y\right) ,W\right] \right) $.%
	\end{tabular}%
\end{equation*}

Putting $z=\left\langle z,x\right\rangle x+z^{\prime }$ with $z^{\prime
}\perp x$, one sees that $\delta _{1}\left( z,\lambda L_{z}\right)
=\left\langle z,x\right\rangle $. Then, as $x\times u$, $x\times v$ and $%
W(x) $ are orthogonal to $x$ ($W$ is skew-symmetric), we have that%
\begin{equation*}
	\delta _{1}\left( \left[ X,X^{\prime }\right] \right) =\delta _{1}\left(
	Z\left( x,y\right) \left( u\right) ,\lambda L_{Z\left( x,y\right) \left(
		u\right) }\right) =\left\langle Z\left( x,y\right) \left( u\right)
	,x\right\rangle =-\left\langle Z\left( x,y\right) \left( x\right)
	,u\right\rangle \text{.}
\end{equation*}%
Similarly, 
\begin{eqnarray*}
	\nu _{1}\left( \left[ X,X^{\prime }\right] \right) &=&\left\langle Z\left(
	x,y\right) \left( v\right) ,x\right\rangle =-\left\langle Z\left( x,y\right)
	\left( x\right) ,v\right\rangle \text{,} \\
	\nu _{3}\left( \left[ X,X^{\prime }\right] \right) &=&-3\lambda ^{2}\nu
	_{3}\left( 0,L_{x\times u}\right) -2\lambda \nu _{3}\left( 0,L_{x\times
		v}\right) +\nu _{3}\left( 0,L_{Z\left( x,y\right) \left( v\right) }\right) \\
	&=&-3\lambda ^{2}\left\langle x\times u,x_{3}\right\rangle -2\lambda
	\left\langle x\times v,x_{3}\right\rangle +\left\langle Z\left( x,y\right)
	\left( v\right) ,x_{3}\right\rangle \\
	&=&3\lambda ^{2}\tfrac{1}{n}\left\langle x\times \left( x\times y\right)
	,u\right\rangle +2\lambda \tfrac{1}{n}\left\langle x\times \left( x\times
	y\right) ,v\right\rangle -\tfrac{1}{n}\left\langle Z\left( x,y\right) \left(
	x\times y\right) ,v\right\rangle \\
	&=&-3\lambda ^{2}\tfrac{1}{n}\left\langle y,u\right\rangle -\tfrac{1}{n}%
	\left\langle 2\lambda y+Z\left( x,y\right) \left( x\times y\right)
	,v\right\rangle \text{.}
\end{eqnarray*}

Therefore, $\alpha \left( \left[ X,X^{\prime }\right] \right) =\left( \delta
_{1}+c\nu _{1}+d\nu _{3}\right) \left( \left[ X,X^{\prime }\right] \right) $
vanishes for all $X^{\prime }$ if an only if%
\begin{equation}
	-nZ\left( x,y\right) \left( x\right) -3\lambda ^{2}dy=0\text{\ \ \ \ \ \ \ \
		and\ \ \ \ \ \ }-cnZ\left( x,y\right) \left( x\right) -2\lambda dy-dZ\left(
	x,y\right) \left( x\times y\right) =0 \text{.} \label{finalEq}
\end{equation}%
We compute%
\begin{eqnarray*}
	Z\left( x,y\right) \left( x\right)  &=&3\left( x\wedge y\right) \left(
	x\right) -\left( x\times y\right) \times x=3\left( \left\langle
	x,x\right\rangle y-\left\langle x,y\right\rangle x\right) -y=2y\text{,} \\
	Z\left( x,y\right) \left( x\times y\right)  &=&3\left( x\wedge y\right)
	\left( x\times y\right) -\left( x\times y\right) \left( x\times y\right) =0%
	\text{.}
\end{eqnarray*}%
Hence, a straightforward computation yields that (\ref{finalEq}) is equivalent
to $3\lambda c=2$ and $3\lambda ^{2}d=-2n$. Solving for $c$ and $d$, we
obtain that $\gamma _{x,y}$ is the geodesic with initial momentum $\alpha $.

If $n=0$, that is, $y=0$, similar, but much simpler arguments apply to show
that $\gamma _{x,0}$ is the geodesic with initial momentum $\alpha _{1}$.
\hfill $\square $

\begin{proposition}
	\label{momentumO}The sub-Riemannian geodesics in the theorem above are
	exactly those with initial momentum vanishing at $\left\{ 0\right\} \times 
	\mathfrak{g}_{2}$.
\end{proposition}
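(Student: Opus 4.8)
The argument splits into two inclusions, and essentially all the content is in the converse.

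\emph{Every geodesic of Theorem~\ref{OctoGeode} has momentum vanishing on $\{0\}\times\mathfrak g_{2}$.} By the proof of that theorem, after rescaling so that $\|x\|=1$ the initial momentum of $\gamma_{x,y}$ is $\alpha=\delta_{1}+c\nu_{1}+d\nu_{3}$ with $c=2/3\lambda$ and $d=-2\|y\|/3\lambda^{2}$ (and $\alpha=\delta_{1}$ if $y=0$). The basis $\mathcal G=\{Z_{1},\dots,Z_{14}\}$ of $\{0\}\times\mathfrak g_{2}$ is part of the basis $\mathcal B$ whose dual basis is $\{\delta_{i},\nu_{i},\zeta_{j}\}$, so each of $\delta_{1},\nu_{1},\nu_{3}$ annihilates every $Z_{j}$ and hence $\alpha(\{0\}\times\mathfrak g_{2})=0$. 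For arbitrary $x$ the momentum is a positive multiple of this one, by $\gamma_{cx,y}(t)=\gamma_{x,y}(ct)$, so the conclusion persists.

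\emph{Conversely}, let $\gamma$ be a sub-Riemannian geodesic through $e$ whose initial momentum $\alpha$ vanishes on $\{0\}\times\mathfrak g_{2}$. The distribution has step $2$ (proof of Theorem~\ref{Octo}), so by Remark~\ref{NabN} $\gamma$ is normal and is therefore determined by $\alpha$. First I would recover $x$: the initial velocity $b(\alpha)$ lies in $\mathcal D^{\lambda}$, say $b(\alpha)=(x,\lambda L_{x})$. If $x=0$ then $\gamma\equiv e=\gamma_{0,0}$; otherwise a constant-speed reparametrization, which by $\gamma_{cx,y}(t)=\gamma_{x,y}(ct)$ just rescales $x$, lets me assume $\|x\|=1$. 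Via the splitting $\mathbb R^{7}\rtimes\mathfrak o(7)=\mathcal D^{\lambda}\oplus\mathcal V\oplus(\{0\}\times\mathfrak g_{2})$ of (\ref{decomG}) and the cometric relation (\ref{subR*}), $\alpha$ is then determined by: its $\mathcal D^{\lambda}$-component is $(z,\lambda L_{z})\mapsto\langle z,x\rangle$; its $\{0\}\times\mathfrak g_{2}$-component is zero; and its $\mathcal V$-component is $(0,L_{w})\mapsto\langle w,\xi\rangle$ for a unique $\xi\in\mathbb R^{7}$.

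The goal is to find $y\in\mathbb R^{7}$ with $y\perp x$ such that $\gamma_{x,y}$ has initial momentum exactly $\alpha$, for then $\gamma=\gamma_{x,y}$ by uniqueness of the normal geodesic attached to a momentum. By the momentum computation in the proof of Theorem~\ref{OctoGeode}, the $\mathcal V$-component of the momentum of $\gamma_{x,y}$ is $w\mapsto\langle w,\tfrac{2}{3\lambda}x-\tfrac{2}{3\lambda^{2}}(x\times y)\rangle$, so one must solve $x\times y=\tfrac{3\lambda^{2}}{2}(\tfrac{2}{3\lambda}x-\xi)$ for $y\perp x$; this is possible because $L_{x}$ restricts to an invertible map on $x^{\perp}$ (from $x\times(x\times v)=\langle x,v\rangle x-\|x\|^{2}v$, the identity underlying the computation $Z(x,y)x=2y$ in the proof of Theorem~\ref{OctoGeode}) while $L_{x}(x)=0$, so $y$ is the unique preimage under $L_{x}|_{x^{\perp}}$ of the $x^{\perp}$-part of the right-hand side. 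The step that is not automatic, and which I expect to be the main obstacle, is showing that the datum $\xi$ of such a geodesic actually satisfies $\langle\xi,x\rangle=\tfrac{2}{3\lambda}$, equivalently that the right-hand side above is orthogonal to $x$ so that the equation is solvable. I would derive this from the structure of normal geodesics: writing $\gamma(t)=\exp(tX)\exp(-tZ)$ with $Z\in\{0\}\times\mathfrak g_{2}$ as in Corollary~\ref{CoroToth} (with $\bar G=G\times G_{2}$), the identity $X-Z=b(\alpha)=(x,\lambda L_{x})$ forces $X=(x,\lambda L_{x}+W)$, $Z=(0,W)$ with $W\in\mathfrak g_{2}$, and evaluating $\alpha\circ\operatorname{ad}_{X}=0$ on $\mathcal D^{\lambda}$- and $\mathcal V$-directions produces $Wx=3\lambda^{2}(x\times\xi)$ and $W\xi=2\lambda(x\times\xi)$; combining these with $W\in\mathfrak g_{2}$ and the octonionic relations (\ref{corZL}), (\ref{decompo}) should both yield the required orthogonality and identify $W$ with $Z(x,y)$ for the $y$ found above. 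This last octonionic bookkeeping is the one genuinely computational step; everything else is routine.
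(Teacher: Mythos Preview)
Your forward direction is correct and is exactly the paper's one-line argument.

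For the converse, the paper takes a much shorter and different route than you do. Given $\alpha$ with $\alpha(\{0\}\times\mathfrak g_2)=0$, the paper simply chooses an orthonormal basis $\{x_1,\dots,x_7\}$ of $\mathbb R^7$ adapted to $\alpha$, namely one for which $\alpha(x_i,\lambda L_{x_i})=0$ for $i>1$ and $\alpha(0,L_{x_i})=0$ for $i>2$, and then refers back to the computations already carried out in the proof of Theorem~\ref{OctoGeode}. No attempt is made to solve for $y$ in terms of a pre-given $\xi$, nor to invoke Corollary~\ref{CoroToth} again.

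Your explicit approach runs into a genuine gap that the proposed workaround does not close. Corollary~\ref{CoroToth} does \emph{not} say that every normal geodesic with momentum $\alpha$ can be written as $\exp(tX)\exp(-tZ)$ with $Z\in\{0\}\times\mathfrak g_2$; it says that a curve of that form is the normal geodesic with momentum $\alpha$ \emph{if and only if} the three identities~(\ref{CoroTothDisplay}) hold. So you cannot begin by ``writing $\gamma(t)=\exp(tX)\exp(-tZ)$'': you must first \emph{produce} $W\in\mathfrak g_2$ satisfying the identities. Your equations $Wx=3\lambda^2(x\times\xi)$ and $W\xi=2\lambda(x\times\xi)$ are necessary conditions on such a $W$; if, as you hope, they together with $W\in\mathfrak g_2$ force $\langle\xi,x\rangle=2/(3\lambda)$, what that actually shows is that for $\xi$ \emph{not} satisfying this relation there is \emph{no} admissible $W$, and hence the corollary gives you no information whatsoever about the geodesic with momentum $\alpha$. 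In particular it does not let you conclude that geodesic is some $\gamma_{x,y}$. There is a second gap even when $W$ exists: the map $y\mapsto Z(x,y)$ from $x^\perp$ into $\mathfrak g_2$ is far from surjective ($6$ versus $14$ dimensions), so ``identifying $W$ with $Z(x,y)$'' is not automatic and would need its own argument.
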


\begin{proof}
	By the proof of the theorem, $\gamma _{x,y}$ satisfies the condition.
	Conversely, let $\alpha \in \mathfrak{g}^{\ast }$ with $\alpha \left( \left\{
	0\right\} \times \mathfrak{g}_{2}\right) =0$. Recalling (\ref{decomG}), call 
	$\phi :\mathbb{R}^{7}\rightarrow \mathcal{D}^{\lambda }$, $\phi \left(
	z\right) =\left( z,\lambda L_{x}\right) $ and $\psi :\mathbb{R}%
	^{7}\rightarrow \mathcal{V}$, $\psi \left( z\right) =\left( 0,L_{z}\right) $%
	. Completing in a suitable way a basis of Ker$~\left( \alpha \circ \phi
	\right) \cap ~$Ker~$\left( \alpha \circ \psi \right) $, one can construct an
	orthonormal basis $\left\{ x_{1},\dots ,x\right\} $ of $\mathbb{R}^{7}$ such
	that $\alpha \left( x_{i},\lambda L_{x_{i}}\right) =0$ for $i>1$ and $\alpha
	\left( 0,L_{x_{i}}\right) =0$ for $i>2$. The statement follows now from the
	proof of the theorem.
\end{proof}

\section{Appendix. Friendlier presentations of $K^{\mathbb{C}}/K$ for
	classical groups\label{appendix}}

We take the opportunity to give a (for us) more amiable and concrete
presentation of the symmetric spaces $M=K^{\mathbb{C}}/K$ in the second row
of Table (\ref{table}) when $K\ $is a classical group, following \cite%
{Neretin}. They are%
\begin{equation}
	SO\left( n,\mathbb{C}\right) /SO\left( n\right) \text{,\ \ \ \ }SL\left( n,%
	\mathbb{C}\right) /SU\left( n\right) \text{\ \ \ \ \ and\ \ \ \ \ }Sp\left(
	2n,\mathbb{C}\right) /Sp\left( n\right) \text{.}  \label{3quo}
\end{equation}

Let $\mathbb{F}=\mathbb{R}$, $\mathbb{C}$ or the quaternions $\mathbb{H}$
and consider on $\mathbb{F}^{n}$ the Hermitian inner product $\left\langle
x,y\right\rangle =x^{\ast }y$, where $\ast $ means conjugate transpose ($%
\mathbb{H}^{n}$ as a right vector space over $\mathbb{H}$). Let 
\begin{equation*}
	U\left( n,\mathbb{F}\right) =\left\{ A\in \mathbb{F}^{n\times n}\mid A^{\ast
	}A=I_{n}\right\}
\end{equation*}%
be the group of $\mathbb{F}$-linear isometries of $\mathbb{F}^{n}$. We have 
\begin{equation*}
	U\left( n,\mathbb{R}\right) =O\left( n\right) \text{,\ \ \ \ \ \ \ }U\left(
	n,\mathbb{C}\right) =U\left( n\right) \text{\ \ \ \ \ \ \ and\ \ \ \ \ \ \ }%
	U\left( n,\mathbb{H}\right) =Sp\left( n\right) \text{.}
\end{equation*}

The quotients $M$ in (\ref{3quo}) are canonically isomorphic to $U\left( n,%
\mathbb{F}\right) ^{\mathbb{C}}/U\left( n,\mathbb{F}\right) $ (except for $%
\mathbb{F}=\mathbb{C}$, in which case $M$ is a hypersurface of $GL\left( n,%
\mathbb{C}\right) /U\left( n\right) $).

\begin{proposition}
	\label{amiable}The quotient $U\left( n,\mathbb{F}\right) ^{\mathbb{C}%
	}/U\left( n,\mathbb{F}\right) $ can be identified naturally with 
	\begin{equation*}
		U^{+}\left( n,\mathbb{F}\right) :=\left\{ A\in U\left( n,\mathbb{F}\right)
		\mid \text{\emph{Re}~}\left( \mu \right) >0\text{ for any eigenvalue }\mu 
		\text{ of }A\right\} \text{.}
	\end{equation*}
\end{proposition}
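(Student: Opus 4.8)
The plan is to combine the polar (Cartan) decomposition of the complexification with a suitable Cayley transform. Realize $G:=U(n,\mathbb{F})$ and $G^{\mathbb{C}}:=U(n,\mathbb{F})^{\mathbb{C}}$ concretely as subgroups of $GL(N,\mathbb{C})$ (with $N=n$ for $\mathbb{F}=\mathbb{R},\mathbb{C}$ and $N=2n$ for $\mathbb{F}=\mathbb{H}$) in such a way that $G=G^{\mathbb{C}}\cap U(N)$; for $\mathbb{F}=\mathbb{R},\mathbb{H}$ the group $G^{\mathbb{C}}$ is the isometry group of a nondegenerate symmetric (resp.\ alternating) bilinear form with structure matrix $S$, i.e.\ $A\in G^{\mathbb{C}}$ iff $SA^{t}S^{-1}=A^{-1}$. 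The first step is the polar decomposition: setting $\mathcal{P}:=\{P\in G^{\mathbb{C}}\mid P^{*}=P,\ P>0\}$, the map $AG\mapsto AA^{*}$ is well defined (because $g^{*}=g^{-1}$ on $G$), injective (if $AA^{*}=BB^{*}$ then $B^{-1}A$ is unitary and lies in $G^{\mathbb{C}}$, hence in $G$), and surjective onto $\mathcal{P}$ (take $A=P^{1/2}$, the unique positive self-adjoint square root, which again belongs to $G^{\mathbb{C}}$ since $G^{\mathbb{C}}$ is closed under such roots). So $G^{\mathbb{C}}/G\cong\mathcal{P}$, and $\mathcal{P}$ is precisely $\exp(\mathfrak{p})$ for the Cartan decomposition in the second row of Table (\ref{table}).

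The second step is to set $\mathcal{C}(P)=(P+iI)(iP+I)^{-1}$ and to prove that $\mathcal{C}$ restricts to a diffeomorphism of $\mathcal{P}$ onto $U^{+}(n,\mathbb{F})$. Three points must be checked. First, $iP+I$ is invertible and $\mathcal{C}(P)$ is unitary: this is the classical fact that the Cayley transform of a positive definite self-adjoint matrix is unitary (one computes $\mathcal{C}(P)^{*}\mathcal{C}(P)=I$ using that everything in sight is a polynomial in $P$). Second, $\mathcal{C}(P)\in G^{\mathbb{C}}$: for $\mathbb{F}=\mathbb{C}$ this is vacuous, while for $\mathbb{F}=\mathbb{R},\mathbb{H}$ one uses $SP^{t}S^{-1}=P^{-1}$ together with self-adjointness of $P$ to obtain $S\,\mathcal{C}(P)^{t}S^{-1}=\mathcal{C}(P)^{-1}$; combined with the first point this gives $\mathcal{C}(P)\in G^{\mathbb{C}}\cap U(N)=G$. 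Third, $\mathcal{C}$ acts on spectra by the M\"obius map $\lambda\mapsto(\lambda+i)/(i\lambda+1)$, which carries $(0,\infty)$ bijectively onto the open arc $\{e^{i\theta}\mid|\theta|<\pi/2\}$; since every $P\in\mathcal{P}$ has positive real spectrum, every eigenvalue of $\mathcal{C}(P)$ has positive real part, so $\mathcal{C}(P)\in U^{+}(n,\mathbb{F})$. Conversely the inverse M\"obius map yields $\mathcal{C}^{-1}(R)=(iI-R)(iR-I)^{-1}$, which is well defined on $U^{+}(n,\mathbb{F})$ (no eigenvalue of such an $R$ equals $-i$) and, by the same computations reversed, takes values in $\mathcal{P}$. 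Hence $\mathcal{C}\colon\mathcal{P}\to U^{+}(n,\mathbb{F})$ is a diffeomorphism, and composing, $G^{\mathbb{C}}/G\cong U^{+}(n,\mathbb{F})$. This identification deserves to be called natural, since it is $G$-equivariant: on $\mathcal{P}$ the left $G$-action is $g\cdot P=gPg^{-1}$ (because $g^{*}=g^{-1}$), on $U^{+}(n,\mathbb{F})$ it is conjugation, and $\mathcal{C}(gPg^{-1})=g\,\mathcal{C}(P)\,g^{-1}$ as $\mathcal{C}$ is a rational expression in $P$.

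I expect the main obstacle to be the middle verification of the second step, i.e.\ that $\mathcal{C}(P)$ actually lands in $G$: one must spell out uniformly, for $\mathbb{F}\in\{\mathbb{R},\mathbb{C},\mathbb{H}\}$, how $G$ sits inside $G^{\mathbb{C}}$, and in the quaternionic case use the quaternionic structure rather than entrywise conjugation when checking this. A secondary point is the determinant bookkeeping: for $\mathbb{F}=\mathbb{R},\mathbb{H}$ the spectrum of $P\in\mathcal{P}$ is invariant under $\lambda\mapsto\lambda^{-1}$, so the eigenvalues of $\mathcal{C}(P)$ occur in conjugate pairs and $\det\mathcal{C}(P)=1$ automatically (consistently with $U^{+}(n,\mathbb{R})\subset SO(n)$ and $U^{+}(n,\mathbb{H})\subset Sp(n)$), whereas for $\mathbb{F}=\mathbb{C}$ there is no such constraint — which is precisely why, as remarked just before the proposition, $SL(n,\mathbb{C})/SU(n)$ sits inside $GL(n,\mathbb{C})/U(n)\cong U^{+}(n,\mathbb{C})$ only as a hypersurface.
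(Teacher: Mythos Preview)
Your argument is correct, but it takes a genuinely different route from the paper. The paper follows Neretin's Grassmannian picture: it identifies $U(n,\mathbb{F})$ with the Grassmannian $\mathcal{G}_{0}(n,n)$ of maximal isotropic subspaces of $\mathbb{F}^{n,n}$ via the graph map $A\mapsto\{(Ax,x)\}$, realizes $U(n,\mathbb{F})^{\mathbb{C}}$ as the subgroup $U^{J}(n,n,\mathbb{F})\subset U(n,n,\mathbb{F})$ commuting with the complex structure $J(x,y)=(-y,x)$, and shows that this subgroup acts transitively (by M\"obius transformations) on the open set $\mathcal{G}_{0}^{J}(n,n)=\{V\mid JV\cap V=0\}$ with isotropy $U(n,\mathbb{F})$; the identity component of the preimage of $\mathcal{G}_{0}^{J}(n,n)$ under the graph map is then exactly $U^{+}(n,\mathbb{F})$. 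Your proof instead goes through the polar decomposition $G^{\mathbb{C}}/G\cong\mathcal{P}=\{P\in G^{\mathbb{C}}\mid P^{*}=P>0\}$ and the Cayley transform $P\mapsto(P+iI)(iP+I)^{-1}$, checking by hand that this lands in $G=G^{\mathbb{C}}\cap U(N)$ and that the spectral M\"obius map $\lambda\mapsto(\lambda+i)/(i\lambda+1)$ carries $(0,\infty)$ onto the right half of the unit circle. Your approach is more elementary and self-contained --- no isotropic Grassmannians, only standard matrix identities --- whereas the paper's route has the advantage of exhibiting the full $U(n,\mathbb{F})^{\mathbb{C}}$-action on $U^{+}(n,\mathbb{F})$ explicitly as fractional-linear transformations, which is precisely what the subsequent Remark (on ``small rotations'') and the worked $SO(2)$ example in the appendix exploit. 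Your identification is only shown to be $G$-equivariant (by conjugation); one could of course transport the $G^{\mathbb{C}}$-action through your diffeomorphism, but the resulting formula would be less transparent than the M\"obius action the paper obtains directly.
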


\begin{remark}
	One has that $\mathbb{F}$-linear isometries in $U^{+}\left( n,\mathbb{F}%
	\right) $ rotate planes in $\mathbb{F}^{n}$ through angles $\theta $ with $%
	\left\vert \theta \right\vert <\pi /2$.\thinspace If $U^{+}\left( n,\mathbb{F%
	}\right) $ is endowed with the symmetric Riemannian metric invariant by the
	action of $U\left( n,\mathbb{F}\right) ^{\mathbb{C}}$, no element of it is
	distinguished \emph{(}the space is of course homogeneous\emph{)} and
	rotations through the angle $\pi /2$ are \textquotedblleft at
	infinity\textquotedblright . This allows us to think of $U^{+}\left( n,%
	\mathbb{F}\right) $, informally, as the set of \textquotedblleft small
	rotations\textquotedblright\ of $\mathbb{F}^{n}$.
\end{remark}

\smallskip

Examples 5, 14 and 30 in List 1 in \cite{Neretin} describe the quotient $%
U\left( n,\mathbb{F}\right) ^{\mathbb{C}}/U\left( n,\mathbb{F}\right) $ as
the Grassmannian $\mathcal{G}_{0}\left( n,n\right) $ of maximal isotropic
subspaces of $\mathbb{F}^{n,n}$, that is, $\mathbb{F}^{2n}$ endowed with the
split Hermitian inner product \thinspace 
\begin{equation*}
	g\left( \left( x,y\right) ,\left( u,v\right) \right) =x^{\ast }u-y^{\ast }v
\end{equation*}%
(recall that an $\mathbb{F}$-subspace $V$ of $\mathbb{F}^{n,n}$ is said to
be isotropic if $g\left( X,Y\right) =0$ for all $X,Y$; for its properties,
see for instance \cite{Harvey}). Applying the arguments in Section 3.4 of 
\cite{Neretin}, we get the desired identification of a connected component
of $\mathcal{G}_{0}\left( n,n\right) $ with $U^{+}\left( n,\mathbb{F}\right) 
$.

Next we particularize to our situation and expand the concise reasoning of
Neretin's article, which has a much broader scope.

\bigskip

\noindent\textit{Proof of Proposition \ref{amiable}.} First we sketch the
steps of the proof and below we elaborate on each one of them.

\textsl{Step 1}. We identify the unitary transformations $A$ in $U\left( n,%
\mathbb{F}\right) $ with their (reflected) graphs $\left\{ \left(
Ax,x\right) \mid x\in \mathbb{F}\right\} $, which turn out to be exactly the
maximal isotropic subspaces of $\mathbb{F}^{n,n}$. More precisely, the map%
\begin{equation}
	F:U\left( n,\mathbb{F}\right) \rightarrow \mathcal{G}_{0}\left( n,n\right) 
	\text{,\ \ \ \ \ \ \ \ }F\left( A\right) =\left\{ \left( Ax,x\right) \mid
	x\in \mathbb{F}\right\} \text{,}  \label{Fgraph}
\end{equation}%
is a bijection. The group $U\left( n,n,\mathbb{F}\right) $ of all the $%
\mathbb{F}$-linear isometries of $\mathbb{F}^{n,n}$ acts naturally on $%
\mathcal{G}_{0}\left( n,n\right) $ and the action is transitive.

\smallskip

\textsl{Step 2}. There is a natural isomorphism 
\begin{equation}
	\psi :U^{J}\left( n,n,\mathbb{F}\right) :=\left\{ A\in U\left( n,n,\mathbb{F}%
	\right) \mid AJ=JA\right\} \rightarrow U\left( n,\mathbb{F}\right) ^{\mathbb{%
			C}}\text{,}  \label{psi}
\end{equation}%
where $J:\mathbb{F}^{n,n}\rightarrow \mathbb{F}^{n,n}$ is given by $J\left(
x,y\right) =\left( -y,x\right) $.

\smallskip

\textsl{Step 3}. The restriction to $U^{J}\left( n,n,\mathbb{F}\right) $ of
the action of $U\left( n,n,\mathbb{F}\right) $ on $\mathcal{G}_{0}\left(
n,n\right) $ preserves%
\begin{equation*}
	\mathcal{G}_{0}^{J}\left( n,n\right) :=\left\{ V\in \mathcal{G}_{0}\left(
	n,n\right) \mid J\left( V\right) \cap V=\left\{ 0\right\} \right\}
\end{equation*}%
and is transitive on each connected component of it, with isotropy subgroup
at $V_{o}=\left\{ \left( x,x\right) \mid x\in \mathbb{F}\right\} $ equal to $%
U\left( n,\mathbb{F}\right) $.

\smallskip

\textsl{Step 4}. The preimage of $\mathcal{G}_{0}^{J}\left( n,n\right) $
under the map $F$ in (\ref{Fgraph}) is 
\begin{equation*}
	U^{\prime }\left( n,\mathbb{F}\right) :=\left\{ A\in U\left( n,\mathbb{F}%
	\right) \mid A^{2}+I\text{ is not singular}\right\}
\end{equation*}%
and $F\left( I_{n}\right) =V_{o}$. Finally, $U^{+}\left( n,\mathbb{F}\right) 
$ is the identity component of $U^{\prime }\left( n,\mathbb{F}\right) $.

The following commutative diagram could be helpful.%
\begin{equation*}
	\begin{array}{ccc}
		U\left( n,\mathbb{F}\right) ^{\mathbb{C}}\times U^{\prime }\left( n,\mathbb{F%
		}\right) & \longrightarrow & U^{\prime }\left( n,\mathbb{F}\right) \\ 
		&  &  \\ 
		\ \ \ \ \ \ \ \ \downarrow \left( \psi ^{-1},F\right) &  & \ \downarrow F \\ 
		&  &  \\ 
		U^{J}\left( n,n,\mathbb{F}\right) \times \mathcal{G}_{0}^{J}\left( n,n\right)
		& \longrightarrow & \mathcal{G}_{0}^{J}\left( n,n\right)%
	\end{array}%
\end{equation*}

Next we discuss each point in detail.

\smallskip

\textsl{Step 1.} We compute 
\begin{equation}
	g\left( \left( Ax,x\right) ,\left( Ay,y\right) \right) =\left( Ax\right)
	^{\ast }Ay-x^{\ast }y=x^{\ast }\left( A^{\ast }A-I_{n}\right) y.
	\label{CuentaNUll}
\end{equation}%
Thus, $F\left( A\right) $ is an isotropic subspace of $\mathbb{F}^{n,n}$,
which is maximal since it has dimension $n$.

A maximal isotropic subspace projects isomorphically to $\left\{ 0\right\}
\times \mathbb{F}^{n}$. Hence, it has the form $\left\{ \left( Bx,x\right)
\mid x\in \mathbb{F}^{n}\right\} $ for some $\mathbb{F}$-linear operator $B$%
. By (\ref{CuentaNUll}), $B\in U\left( n,\mathbb{F}\right) $. Therefore, the
image of $F$ is $\mathcal{G}_{0}\left( n,n\right) $.

The action of $U\left( n,n,\mathbb{F}\right) $ on $\mathcal{G}_{0}\left(
n,n\right) $ is the natural one. It is transitive by Witt's Theorem. The
induced action on $U\left( n,\mathbb{F}\right) $ through $F$ in (\ref{Fgraph}%
) is by M\"{o}bius transformations: If 
\begin{equation*}
	X=\left( 
	\begin{array}{cc}
		a & c \\ 
		b & d%
	\end{array}%
	\right) \in U\left( n,n,\mathbb{F}\right) \text{,\ \ \ \ \ then\ \ \ \ \ }%
	X\cdot A=\left( aA+c\right) \left( bA+d\right) ^{-1}\text{,}
\end{equation*}%
since for a suitable $y\in \mathbb{F}^{n\times n}$ one has%
\begin{equation}
	\left( 
	\begin{array}{cc}
		a & c \\ 
		b & d%
	\end{array}%
	\right) \left( 
	\begin{array}{c}
		Ax \\ 
		x%
	\end{array}%
	\right) =\left( 
	\begin{array}{c}
		aAx+c \\ 
		bAx+d%
	\end{array}%
	\right) =\left( 
	\begin{array}{c}
		\left( aA+c\right) \left( bA+d\right) ^{-1}y \\ 
		y%
	\end{array}%
	\right) \text{.}  \label{xy}
\end{equation}%
The third author has dealt with similar M\"{o}bius transformations in \cite%
{ESV}.

\smallskip

\textsl{Step 2.}\textbf{\ }The group $U^{J}\left( n,n,\mathbb{F}\right) $
consists of the all the matrices $X\in \mathbb{F}^{2n\times 2n}$ such that $%
X^{\ast }RX=X$ and $XJ=JX$, where 
\begin{equation*}
	R=\left( 
	\begin{array}{cc}
		I_{n} & 0_{n} \\ 
		0_{n} & -I_{n}%
	\end{array}%
	\right) \text{\ \ \ \ \ and\ \ \ \ \ }J=\left( 
	\begin{array}{cc}
		0 & -I_{n} \\ 
		I_{n} & 0%
	\end{array}%
	\right) \text{.}
\end{equation*}%
Straightforward computations yield%
\begin{eqnarray*}
	U^{J}\left( n,n,\mathbb{F}\right) &=&\left\{ \left( 
	\begin{array}{cc}
		a & -b \\ 
		b & a%
	\end{array}%
	\right) \mid 
	\begin{array}{l}
		a,b\in \mathbb{F}^{n\times n}\text{, }a^{\ast }a-b^{\ast }b=I_{n} \\ 
		\text{and }a^{\ast }b=-b^{\ast }a\text{.}%
	\end{array}%
	\right\} \\
	&=&\left\{ \left( 
	\begin{array}{cc}
		u\cos z & -u\sin z \\ 
		u\sin z & u\cos z%
	\end{array}%
	\right) \mid 
	\begin{array}{l}
		u,z\in \mathbb{F}^{n\times n}\text{, }u^{\ast }u=I_{n} \\ 
		\text{and }z^{\ast }=-z\text{.}%
	\end{array}%
	\right\} 
\end{eqnarray*}%
(here, as usual, $\cos z$ and $\sin z$ are defined by means of the
well-known power series).

Notice that for $\mathbb{F}=\mathbb{C}$, $\mathbb{H}$, we can have the more
common presentation 
\begin{equation*}
	\cos z=\cosh w\text{\ \ \ \ \ \ and\ \ \ \ \ \ }\sin z=-i\sinh w\text{,\ \ \
		\ \ \ with }w=iz
\end{equation*}%
($w$ is Hermitian symmetric if and only if $z$ is Hermitian skew-symmetric).

Define $\psi $ as in (\ref{psi}) by 
\begin{equation*}
	\psi \left( 
	\begin{array}{cc}
		a & -b \\ 
		b & a%
	\end{array}%
	\right) =a+ib=u\cos z+iu\sin z=u\left( \cos z+i\sin z\right) =u\exp \left(
	iz\right) \text{,}
\end{equation*}%
which is clearly one to one. It is also surjective, since 
\begin{equation*}
	U\left( n,\mathbb{F}\right) \times \left\{ z\in \mathbb{F}^{n\times n}\mid z%
	\text{ is skew-Hermitian}\right\} \rightarrow U\left( n,\mathbb{F}\right) ^{%
		\mathbb{C}},\ \ \ \left( u,z\right) \mapsto u\exp \left( iz\right)
\end{equation*}%
is the Cartan decomposition of $U\left( n,\mathbb{F}\right) ^{\mathbb{C}}$.

\smallskip

\textsl{Step 3.}\textbf{\ }If $V\in \mathcal{G}_{0}^{J}\left( n,n\right) $
and $X\in U^{J}\left( n,\mathbb{F}\right) $, then%
\begin{equation*}
	J\left( XV\right) \cap XV=X\left( JV\right) \cap XV=X\left( \left( JV\right)
	\cap V\right) =\left\{ 0\right\} \text{,}
\end{equation*}%
hence $X\left( V\right) \in \mathcal{G}_{0}^{J}\left( n,n\right) $. Note
that $g\left( J\left( X\right) ,J\left( Y\right) \right) =-g\left(
X,Y\right) $ for all $X,Y\in \mathbb{F}^{n,n}$. Proceeding as in (\ref{xy}),
we compute%
\begin{equation*}
	\left( 
	\begin{array}{cc}
		a & -b \\ 
		b & a%
	\end{array}%
	\right) \left( 
	\begin{array}{c}
		x \\ 
		x%
	\end{array}%
	\right) =\left( 
	\begin{array}{c}
		\left( a-b\right) x \\ 
		\left( b+a\right) x%
	\end{array}%
	\right)
\end{equation*}%
and see that $a-b=b+a$ if and only if $b=0$ and $a^{\ast }a=I_{n}$. Hence,
the isotropy subgroup at $V_{o}=\left\{ \left( x,x\right) \mid x\in \mathbb{F%
}\right\} $ is isomorphic to $U\left( n,\mathbb{F}\right) $. By a dimension
counting argument, the action is transitive on each connected component of $%
\mathcal{G}_{0}^{J}\left( n,n\right) $.

\smallskip

\textsl{Step 4.}\textbf{\ }Now we see that $\left\{ A\in U\left( n,\mathbb{F}%
\right) \mid F\left( A\right) \in \mathcal{G}_{0}^{J}\left( n,n\right)
\right\} =U^{\prime }\left( n,\mathbb{F}\right) $. We have that $\left(
Ax,x\right) =J\left( Ay,y\right) =\left( -y,Ay\right) $ if and only if $%
y=-Ax $ and $x=Ay$, which implies that $A^{2}y=-y$. This yields one
inclusion. For the remaining one, take $y\neq 0$ with $A^{2}y=-y$ and
consider $x=Ay$.

It is not difficult to verify that $A\in U^{\prime }\left( n,\mathbb{F}%
\right) $ if and only if $\pm i$ is not an eigenvalue of $A$ (for $\mathbb{F}%
=\mathbb{H}$ we mean the eigenvalues of the underlying $\mathbb{C}$-linear
map). The relationship between $U^{\prime }\left( n,\mathbb{F}\right) $ and $%
U^{+}\left( n,\mathbb{F}\right) $ follows from the (suitable stated)
continuity of the eigenvalues of a matrix as functions of its entries.
\hfill $\square $

\bigskip

\noindent \textbf{Example.\ }Let $K=SO\left( 2\right) $. Under the usual
identification of it with $S^{1}$ we have that $SO^{\prime }\left( 2\right)
\equiv \left\{ u\in S^{1}\mid \operatorname{Re}u\neq 0\right\} $. We make
explicit the action of $K^{\mathbb{C}}=SO\left( 2,\mathbb{C}\right) $ on $%
\varepsilon =\pm 1\in S^{1}$:%
\begin{equation}
	\left( 
	\begin{array}{cc}
		\cos \zeta & -\sin \zeta \\ 
		\sin \zeta & \cos \zeta%
	\end{array}%
	\right) \cdot \varepsilon =\varepsilon e^{-i\varepsilon \arcsin \left( \tanh
		2t\right) }  \label{arcsin}
\end{equation}%
if $\zeta =s+it\in \mathbb{C}$. Indeed, putting%
\begin{equation*}
	u=\left( 
	\begin{array}{cc}
		\cos s & -\sin s \\ 
		\sin s & \cos s%
	\end{array}%
	\right) \text{\ \ \ \ \ \ and\ \ \ \ \ \ }z=\left( 
	\begin{array}{cc}
		0 & -t \\ 
		t & 0%
	\end{array}%
	\right) \text{,}
\end{equation*}%
a straightforward computation using the power series of $\cos $ and $\sin $
yields%
\begin{equation*}
	\cos z+i\sin z=\left( 
	\begin{array}{cc}
		\cosh t & -i\sinh t \\ 
		i\sinh t & \cosh t%
	\end{array}%
	\right)
\end{equation*}%
and hence 
\begin{equation*}
	\psi \left( 
	\begin{array}{cc}
		u\cos z & -u\sin z \\ 
		u\sin z & u\cos z%
	\end{array}%
	\right) =u\left( \cos z+i\sin z\right) =\left( 
	\begin{array}{cc}
		\cos \zeta & -\sin \zeta \\ 
		\sin \zeta & \cos \zeta%
	\end{array}%
	\right) \text{.}
\end{equation*}

By the definition of the action of $SO^{J}\left( 2,2\right) $ on $SO\left(
2\right) $, as in (\ref{xy}), given $v\in SO\left( 2\right) $, we have 
\begin{equation*}
	\left( 
	\begin{array}{cc}
		u\cos z & -u\sin z \\ 
		u\sin z & u\cos z%
	\end{array}%
	\right) \left( 
	\begin{array}{c}
		vx \\ 
		x%
	\end{array}%
	\right) =\left( 
	\begin{array}{c}
		u\left( v\cos z-\sin z\right) x \\ 
		u\left( v\sin z+\cos z\right) x%
	\end{array}%
	\right) =\left( 
	\begin{array}{c}
		wy \\ 
		y%
	\end{array}%
	\right)
\end{equation*}%
for a suitable $y\in \mathbb{R}^{2}$, where $w=\left( v\cos z-\sin z\right)
\left( v\sin z+\cos z\right) ^{-1}$. Now, specializing in $v=\varepsilon
I_{2}$ with $\varepsilon =\pm 1$, we have 
\begin{equation*}
	w=\left( 
	\begin{array}{cc}
		\varepsilon \cosh t & \sinh t \\ 
		-\sinh t & \varepsilon \cosh t%
	\end{array}%
	\right) \left( 
	\begin{array}{cc}
		\cosh t & -\varepsilon \sinh t \\ 
		\varepsilon \sinh t & \cosh t%
	\end{array}%
	\right) ^{-1}=\left( 
	\begin{array}{cc}
		\varepsilon \text{sech}\left( 2t\right) & \tanh \left( 2t\right) \\ 
		-\tanh \left( 2t\right) & \varepsilon \text{sech}\left( 2t\right)%
	\end{array}%
	\right) \text{,}
\end{equation*}%
from which (\ref{arcsin}) follows.



\noindent\textsc{famaf} (Universidad Nacional de C\'ordoba) and \textsc{ciem} (Conicet) , Ciudad
	Universitaria, (X5000HUA) C\'{o}r\-doba, Argentina; eduardo.hulett@unc.edu.ar
	
	\smallskip
	
\noindent\textsc{fcefq}y\textsc{n} (Universidad Nacional de R\'{\i}o Cuarto) and Conicet, Argentina; paomoas@unc.edu.ar

\smallskip
	
\noindent\textsc{famaf}  (Universidad Nacional de C\'ordoba) and \textsc{ciem} (Conicet) , Ciudad
	Universitaria, (X5000HUA) C\'{o}r\-doba, Argentina; marcos.salvai@unc.edu.ar
  
\end{document}